\allowdisplaybreaks \setlength{\textwidth}{6.5in}
\flushbottom \pagestyle{myheadings}
\numberwithin{equation}{section}
\newtheorem{Lemma}{Lemma}[section]
\newtheorem{Theorem}[Lemma]{Theorem}
\newtheorem{Proposition}[Lemma]{Proposition}
\newtheorem{Corollary}[Lemma]{Corollary}
\theoremstyle{definition}
\newtheorem{Remark}{Remark}[section]
\theoremstyle{definition}
\newtheorem{Example}{Example}[section]
\def\Pick{\text{Pickandsish}_{k,n}}
\def\Hill{\text{Hill}_{k,n}}
\def\Hillish{\text{Hillish}_{k,n}}
\def\Kendall{ \rho_{\tau}(k,n)}
\def\cinP{\stackrel{\text{P}}{\to}}
\def\cnvg{\stackrel{v}{\to}}
\def\bX{\boldsymbol X}
\def\bY{\boldsymbol Y}
\def\bone{\boldsymbol 1}
\def\bzero{\boldsymbol 0}
\def\binfty{\boldsymbol \infty}
\def\E{\mathbb{E}}
\def\P{\mathbf{P}}
\def\bM{\mathbb{M}}
\def\R{\mathbb{R}}
\def\Hinv{H^{\leftarrow}}
\def\tHinv{\tilde{H}^{\leftarrow}}
\def\inv{^{\leftarrow}}
\def\Enb{\mathbb{E}_{\sqcap}}
\begin{document}
\bibliographystyle{plainnat}
\date{\today}
\title[Detection: CEV models]{Detecting a conditional extreme value model
}
\author[B.\ Das]{Bikramjit\ Das}

\address{Bikramjit\ Das\\
School of Operations Research and Information Engineering\\
Cornell University \\
Ithaca, NY 14853} \email{bd72@cornell.edu}

\author[S.I.\ Resnick]{Sidney I.\ Resnick}
\address{Sidney Resnick\\
School of Operations Research and Information Engineering\\
Cornell University \\
Ithaca, NY 14853} \email{sir1@cornell.edu}
\thanks{B. Das and S. Resnick were partially supported by ARO Contract W911NF-07-1-0078
at Cornell University. }

\begin{abstract} In classical extreme value theory probabilities of extreme events are estimated assuming all the components
of a random vector to be in a domain of attraction of an extreme value distribution. In
contrast, the conditional extreme value model assumes a
domain of attraction condition on a sub-collection of the components of a multivariate random
    vector. This model has been studied in
    \cite{heffernan:tawn:2004,heffernan:resnick:2007,das:resnick:2008a}.
    In this paper we propose three statistics  which act as tools to
    detect  this model in a bivariate set-up. In addition, the
    proposed statistics  also help to distinguish between two forms
    of the limit measure that is obtained in the model.
\end{abstract}

 \keywords{Regular variation, domain of attraction, heavy tails, asymptotic independence, conditional extreme value model}
\maketitle

\section{Introduction}\label{sec:intro}

 Extreme value theory (henceforth abbreviated EVT) is used to model
 environmental, financial and internet traffic data.  Multivariate
 extreme value theory  assumes an extreme-valued domain of attraction
 condition on the joint distribution of a random vector which after a
 suitable standardization  relates multivariate extreme value theory
 to regular variation on  $\R_{+}^d$. Probabilities of critical risk
 events can be  estimated under the regularly varying assumption and
 then related back to the original co-ordinate system. Such estimates
 are a function of  the dependence structure among the variables in
 the multivariate model. There is a substantial literature on the
 behavior of multivariate extreme-value theory both in the presence of
 asymptotic dependence and asymptotic independence
 \citep{coles:tawn:1991,ledford:tawn:1996,ledford:tawn:1997,ledford:tawn:1998,
dehaan:deronde:1998,resnick:2002a,maulik:resnick:2005,dehaan:ferreira:2006}.

 A \emph{conditional extreme value\/} (CEV) model was proposed in \cite{heffernan:tawn:2004}, where
 multivariate distributions were approximated by conditioning on one
 of the components being in an extreme-value domain.
The authors showed that this approach incorporated a variety of  examples of different types of asymptotic dependence and
 asymptotic independence. Their statistical ideas were given a more mathematical framework by
 \cite{heffernan:resnick:2007} after slight changes in  assumptions
 to make the theory more probabilistically viable. A further study (\cite{das:resnick:2008a}) revealed that assuming a conditional extreme value model holds no matter what conditioning variable is chosen is equivalent to assuming a multivariate extreme-value domain of attraction on the entire random vector.
  It is known that in the presence of asymptotic independence, the
  limit measure in the multivariate EVT set up has an empty interior;
  in other words, the limit measure concentrates on the boundary of
  the state space. In such a case an additional assumption of a CEV
  model provides more insight into the dependence structure. The CEV
  model also provides a way for modeling multivariate data assuming a
  subset  rather than the entire vector to be  extreme-valued. In this
  paper we suggest situations where a CEV model can be used and
  suggest techniques to detect the model and in the process also
  detect characteristics of the limit measure for the model. The
  methodologies are suggested for a bivariate data set.

 Section \ref{sec:intro}  provides an introduction and review of the model. Section \ref{sec:product} deals with the detection of  a conditional extreme value model. It has been shown in \cite{das:resnick:2008a} that the CEV model can be standardized to regular variation on a special cone if and only if the limit measure involved is not a product. In case of a product measure in the limit, we need to estimate fewer parameters and calculating probabilities is also simpler. \cite{fougeres:soulier:2008} suggests some  estimates for parameters and normalizing constants in the two different cases (product and non-product limit measures). Hence it is  important to  know whether we are in the product case or not. In Section \ref{sec:product} we propose three statistics whose behavior can first of all indicate the appropriateness of the CEV model and secondly indicate whether the limit measure is a product or not.  Section \ref{sec:data} is dedicated to applying our techniques to some simulated and real data coming from Internet traffic studies.

 \subsection{Preliminaries: The CEV model and some related results}\label{subsec:prelim}
 We discuss the model and preliminary results in this subsection. Refer to \cite{heffernan:resnick:2007} and \cite{das:resnick:2008a} for further discussion on conditional extreme value models.

  A univariate extreme value distribution $G_{\gamma}$, for $\gamma \in
 \R$, is defined as
 \begin{align}\label{eqn:Ggamma}
 G_{\gamma}(x)= \exp\{-(1+\gamma x)^{-1/\gamma}\},\qquad x \in \E^{(\gamma)},
 \end{align}
  where
 $\E^{(\gamma)} = \{ x \in \R : 1+ \gamma x > 0\}$. For $\gamma = 0$, the distribution function is interpreted as
 $G_{0}(x)= e^{-e^{-y}},\, y \in \E^{(0)}:=\mathbb{R}$.
 We let $\overline{\E}^{(\gamma)}$ be the right closure of
 $\E^{(\gamma)}$; that is,
 \begin{equation*}
 \overline{\E}^{(\gamma)}  = \begin{cases} (-\frac{1}{\gamma}, \infty]
                                         &  \gamma  >  0,\\
                                         (-\infty, \infty]   & \gamma  = 0,\\
                                         (- \infty, -\frac{1}{\gamma}] &  \gamma < 0.
 \end{cases}
 \end{equation*}

   Let $(X,Y)$ be a  random vector in $\R^2$.
For a CEV model we make the following assumptions:\begin{enumerate}
 \item
The distribution $F(x):=P[Y\leq x]$ is in the domain of attraction
of an extreme value distribution, $G_{\gamma}$, written $F\in
D(G_\gamma)$.
This means there exist
functions $a(t)>0$, $b(t) \in \R$ such that, as $ t \to \infty$,
\begin{align}
  t(1-F(a(t)y + b(t))) = t\P \biggl(\frac{Y-b(t)}{a(t)} > y \biggr)
  \to (1 + \gamma
y)^{-1/\gamma}, \qquad  1+\gamma y > 0, \label{eqn:doa}
\end{align} for $y \in \E^{(\gamma)}.$

\item There exist functions $\alpha(t) > 0$ and
$\beta(t) \in \R$ and a non-null Radon measure $\mu$ on Borel
subsets of $[-\infty,\infty] \times \overline{\E}^{(\gamma)} $ such
that for each $ y \in \E^{(\gamma)}$,
\begin{align}
  & [a] \quad t\P\biggl(\frac{X- \beta(t)}{\alpha(t)} \le x, \frac{Y - b(t)}{a(t)} > y\biggr)  \to \mu([-\infty,x] \times(y,\infty]), \quad \text{ as } t \to \infty, \label{eqn:cond2}\\
   \intertext{\qquad \qquad \quad \quad {for $(x,y)$ continuity points of the limit,
   }}
  & [b] \quad \mu([-\infty,x]\times(y,\infty]) \quad \text{ is not a degenerate distribution in $x$}, \label{eqn:nondegx}\\
  & [c] \quad  \mu([-\infty,x]\times(y,\infty]) < \infty.
  \label{eqn:munotinf}\\
  & [d] \quad H(x):= \mu([-\infty,x]\times(0 ,\infty]) \text{ is a probability
  distribution.} \label{eqn:Hispdf}
   \end{align}
 \end{enumerate}
 A non-null
 Radon measure $\mu(\cdot)$ is said to satisfy the \emph{conditional non-degeneracy
 conditions} if both of \eqref{eqn:nondegx} and \eqref{eqn:munotinf}
hold.
 We  say that $(X,Y)$  follows a \emph{conditional extreme value
 model} (or CEV model) if conditions (1) and (2) above are satisfied. The
reason for this name is that, assuming $(x,0)$ to be a continuity point of $\mu$, \eqref{eqn:cond2}, \eqref{eqn:nondegx}
and
 \eqref{eqn:munotinf} imply that
 \begin{align}
 \P\biggl(\frac{X-\beta(t)}{\alpha(t)} \le x \bigg| Y > b(t)\biggr) \to H(x) =
 \mu([-\infty,x]\times(0 ,\infty]) \qquad (t \to \infty).
 \label{eqn:constd}
 \end{align}
 Note that \eqref{eqn:cond2} can be viewed in terms of vague
 convergence of measures in $\mathbb{M}_{+}\bigl(
[-\infty,\infty] \times
 \overline{\E}^{(\gamma)}
\bigr)$, the space of Radon measures on $[-\infty,\infty] \times
 \overline{\E}^{(\gamma)}
$.

In this model the transformation $Y \mapsto Y^*=b\inv(Y)$  standardizes the $Y$-variable, i.e., we can assume  $a^*(t)=t,b^*(t)=0$. Hence a reformulation of
\eqref{eqn:cond2} leads to
\begin{align}
 t\P\biggl(\frac{X- \beta(t)}{\alpha(t)} \le x, \frac{Y^*}{t} > y\biggr)  & \to \mu^{*}([-\infty,x] \times(y,\infty]), \quad \text{ as } t \to \infty,
\intertext{for $(x,y)$ continuity points of $\mu^{*}$ where,}
 \mu^{*}([-\infty,x] \times(y,\infty])  & =
\begin{cases}
\mu \bigl( [-\infty,x]\times (\frac{y^\gamma -1}{\gamma},\infty]
\bigr),& \text{ if } \gamma \neq 0,\\
\mu \bigl( [-\infty,x]\times (\log y,\infty]
\bigr),& \text{ if } \gamma = 0. \label{eqn:stdizemu}\\
\end{cases}
\end{align}

We also know \citep[Proposition 1]{heffernan:resnick:2007} that  the following
variational property holds: there exists functions $\psi_1(\cdot),
\psi_2(\cdot)$ such that for all $c>0$,
\begin{align}
&\lim\limits_{t \to \infty} \frac{\alpha(tc)}{\alpha(t)}  =
\psi_1(c), & &\lim\limits_{t \to \infty}
\frac{\beta(tc)-\beta(t)}{\alpha(t)}  = \psi_2(c).
\label{eqn:condpsi1psi2}
\end{align}
This implies that $\psi_1(c) = c^{\rho}$ for some $\rho \in \R$
\cite[Theorem B.1.3]{dehaan:ferreira:2006}. Also, $\psi_2$ can be either
0 or $\psi_2(c) = k \frac{c^{\rho}-1}{\rho}$ for some $k \neq 0$
\citep[Theorem B.2.1]{dehaan:ferreira:2006}.

\begin{Remark} The CEV model primarily differs from the multivariate extreme value model
in the domain of attraction condition. \cite{das:resnick:2008a}
provides conditions under which a CEV model can be extended to
multivariate extreme value model. Under the multivariate extreme
value model, each of the variables can be standardized so that we
have a multivariate regular variation on the cone $[\bzero,\binfty]
\setminus \{\bzero\}$; see  \cite{dehaan:resnick:1977}
and Chapter 6 of \citet{dehaan:ferreira:2006}. The
conditional extreme value model can  be standardized if and
only if the limit measure $\mu$ in \eqref{eqn:cond2} is  not a
product measure \citep{das:resnick:2008a}.
When both $X$ and $Y$ are standardized,
 we can characterize the limit measure
in terms of all Radon measures (finite and infinite) on $[0,1)$.
Though theoretically elegant, performing
standardization in practice is not an easy task.

Thus, it is important to know when
 the limit is a product measure.
 A product
measure in the limit precludes standardization of both the variables
\citep{heffernan:resnick:2007} and   means that  we do not have a
multivariate extreme value model \citep{das:resnick:2008a}. However, a
product measure
makes the estimation of certain parameters and probabilities easier.
For instance, in the product case $(\psi_1,\psi_2)\equiv (1,0)$ so
$\rho=0$ (see \eqref{eqn:condpsi1psi2})
but without the property that $\mu $ is a product, $\rho $ has to
be estimated. Furthermore, the limit being a
product measure can be considered as a form of \emph{asymptotic
  independence\/} in the CEV model, which can be probabilistically
useful \citep{maulik:resnick:rootzen:2002}.
\end{Remark}

\subsection{Appropriateness of the CEV model.} \label{subsec:whycevm}
Multivariate
extreme value theory provides a rich literature on estimation of
probabilities of extreme regions containing few or no data
points in the sample. The multivariate theory assumes that each variable is
marginally in an extreme value domain of attraction. However, this might not
be the right assumption for all data sets. We encounter data where one or some but not all the variables can be
assumed to be in an extreme value domain; see Section
\ref{subsec:Internet}. The CEV model is a candidate
 model in such  cases.

 Another circumstance where the CEV model can be helpful is
if one has a multivariate extreme value model with limit measure $\nu$
possessing \emph{asymptotic independence}. This means that in the
standardized model, the limit measure, $\nu^{*}(\cdot)$, concentrates
on the axes through $\{\bzero\}$ and $\nu^*((\bzero,\binfty])=0$. So an
estimate of the probability of a region where both variables are big
 will turn out to be zero which may be a useless and misleading
estimate. In such a circumstance, finer estimates
 can be obtained using either
\emph{hidden regular variation} \citep{maulik:resnick:2005}
or the CEV model. Both methods provide a non-zero limit
measure by using normalization functions which are of different order
from the multivariate EV model.  The relationship between the
multivariate EV model and the CEV model and the respective normalizing
functions considered in \cite{das:resnick:2008a}.

So, how do we decide if the CEV model is appropriate for multivariate
data?
\begin{enumerate}
\item Start by  checking whether any
of the marginal variables belongs to the domain of attraction of an
extreme value distribution. An informal way to do this is through plots of the
estimators of the extreme value parameter $\gamma$ (Pickands plot,
Moment estimator plot, etc). If the plot attains stability in some
range it is reasonable to assume an extreme-value model.
More formal methods
for testing membership in a domain of attraction using quantile
 and distribution
functions are discussed in \cite{dehaan:ferreira:2006}, Chapter
5.2. The special case of a heavy-tailed random variable can be
detected using the QQ plot, plotting the theoretical quantiles of the
exponential distribution versus the logarithm of the sorted data and
checking for linearity in the high values of the data. This is
reviewed in \cite{resnickbook:2007} and \cite{das:resnick:2008}.

\item If some, but not all, marginal variables are in a domain of attraction,
  proceed to see if the data is consistent with the CEV model. See
  Section \ref{sec:product}.

\item  If all variables are in some extreme value domain,  check if
  the multivariate extreme value model is appropriate and if asymptotic
independence is present. One way to do this is by checking whether both maximum
and minimum of the standardized variables have distributions with
regularly varying tails \citep{coles:heffernan:tawn:1999,resnick:2002a}. If the EV model is
appropriate and
asymptotic independence is absent, the CEV model does not
provide  any more information than the EV model. On the other hand if
asymptotic independence is present,  the CEV model, if detected,  provides supplementary information about
the joint behavior of the variables away from at least one of the
axes.
\end{enumerate}

\section{Three estimators for detecting the CEV model }\label{sec:product}
Let $(X_1, Y_1), \ldots, (X_n, Y_n)$ be a bivariate random sample. In
this section we propose three statistics to detect whether our sample
is consistent with the CEV model under the assumption that at least
one of the variables is in an extreme-value domain, and without loss
of generality we assume $Y$ to be that variable. Our statistics have a
consistency property which allows
  detection of a product form for the limit measure.

Assume $(X_1, Y_1), \ldots, (X_n, Y_n)$
 is iid  from  a CEV model as defined in Section \ref{subsec:prelim}.
We first formulate a consequence of \eqref{eqn:cond2} which will
be convenient for our purpose. The following notations will be used.
$$
\begin{array}{llll}
Y_{(1)} \ge \ldots \ge Y_{(n)} & \text{The decreasing order
statistics of  $Y_1,\ldots,Y_n$.}\\[2mm]
X_i^{*}, ~ 1 \le i \le n & \text{The $X$-variable corresponding to
$Y_{(i)}$, also called the concomitant of $Y_{(i)}$.}\\ [2mm]
R_{i}^k= \sum\limits_{l=i}^k \bone_{\{X_l^{*} \le X_i^{*}\}}& \text{Rank of $X_i^{*}$ among
$X_1^{*},\ldots,X_{k}^{*}$. We write $R_i=R_i^k$.}\\ [2mm]
X_{1:k}^{*} \le X_{2:k}^{*} \le \ldots \leq X_{k:k}^{*} & \text{The
increasing order statistics of $X_1^{*},\ldots,X_k^{*}$.}
\end{array}
$$

\subsection{A consequence for empirical measures}
\label{subsec:ranks} When the CEV property holds,
a family of point processes of ranks of the sample converge vaguely to a Radon measure.
By transforming to ranks of the data, we presumably lose  efficiency since
only the relative ordering in the sample remains unchanged but
detection of the CEV property is  easier since we no longer need  to
estimate the various
parameters of the model. See \citet{dehaan:deronde:1998, dehaan:ferreira:2006,
 resnickbook:2007}.

The convergence statement \eqref{eqn:cond2} of the CEV model defined
in Section \ref{subsec:prelim} can be interpreted
 in terms of vague convergence of measures. In preparation  for the forthcoming result we recall some commonly used notation and concepts.  Let $\E^{*}$ be  a locally compact space
with a countable base (for example, a finite dimensional Euclidean
space). We denote by $\mathbb{M}_+(\E^{*})$, the non-negative Radon measures on
Borel subsets of $\E^{*}$.
If $\mu_n \in \mathbb{M}_+(\E^{*})$ for $n \geq 0$, then $\mu_n$ converge
vaguely to $\mu_0$ (written $\mu_n \stackrel{v}{\to} \mu_0$) if for
all bounded continuous functions $f$ with compact support we have
$$\int_{\E^{*}} fd\mu_n \to \int_{\E^{*}} fd\mu_0\quad (n \to \infty).$$
This concept allows us to write  \eqref{eqn:cond2}  as
\begin{align} \label{eqn:tovag}
 t \P\Bigg(\Big(\frac{X- \beta(t)}{\alpha(t)}, \frac{Y - b(t )}{a(t)}\Big) \in \cdot \Bigg) &
 \cnvg
\mu(\cdot), \quad \text{ as } t \to \infty
\end{align}
in $\bM_{+}([-\infty,\infty]\times\overline{\E}^{(\gamma)})$. Standard references
include \citet{kallenberg:1983, neveu:1977} and \citet[Chapter
3]{resnickbook:2008}.

 Recall the definition of $\mu^*$ in \eqref{eqn:stdizemu} and define
 the measure $L(\cdot) \in \bM_{+}([0,1]\times [1,\infty])$ by
\begin{equation}\label{eqn:almostCopula}
L([0,x]\times
(y,\infty])=\mu^{*}([-\infty,\Hinv(x)]\times [y,\infty]),
\quad (x,y)\in [0,1]\times (1,\infty].
\end{equation}
Applying a reciprocal transformation to the second coordinate,
$$T_0: (x,y) \mapsto (x,y^{-1})$$
converts $L$ into the copula $L\circ T_0^{-1}$.

\begin{Proposition}\label{prop:empconv}
Suppose $(X_1,Y_1),(X_2,Y_2),\ldots,(X_n,Y_n)$ are i.i.d. observations
from a CEV model which follows \eqref{eqn:doa}-\eqref{eqn:Hispdf} and
suppose $H$ is continuous. If $k=k(n)
\to \infty, n \to \infty$ with $k/n \to 0$, then in $\bM_+([0,1]\times [1,\infty])$
\begin{align*}
\frac{1}{k} \sum\limits_{i=1}^{k}
\epsilon_{(\frac{R_i}{k},\frac{k+1}{i})} (\cdot) \Rightarrow L(\cdot).
\end{align*}
\end{Proposition}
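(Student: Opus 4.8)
The plan is to pass from the raw data to standardized coordinates, push the standardized limit measure $\mu^{*}$ forward under the coordinatewise map that applies $H$ to the first coordinate, and finally replace the unknown, parameter-dependent standardized coordinates by the rank surrogates $(R_i/k,(k+1)/i)$, checking that this replacement leaves the vague limit unchanged. First I would standardize: writing $Y^{*}=b\inv(Y)$ leaves the ranks of $Y_1,\ldots,Y_n$ and the concomitant pairing unchanged while making $\P(Y^{*}>y)$ regularly varying with index $-1$, and by \eqref{eqn:stdizemu} condition \eqref{eqn:cond2} becomes $t\P((\tfrac{X-\beta(t)}{\alpha(t)},\tfrac{Y^{*}}{t})\in\cdot)\cnvg\mu^{*}(\cdot)$. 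Setting $t=n/k$ and forming the normalized empirical measure
\[
\tilde\nu_n=\frac1k\sum_{j=1}^n\epsilon_{\left(\frac{X_j-\beta(n/k)}{\alpha(n/k)},\,\frac{Y_j^{*}}{n/k}\right)},
\]
its mean measure converges to $\mu^{*}$ by the standardized form of \eqref{eqn:cond2}, and since the summands are i.i.d.\ a variance bound on the number of points in a relatively compact set upgrades this to $\tilde\nu_n\cinP\mu^{*}$ in $\bM_{+}([-\infty,\infty]\times[1,\infty])$, the usual tail--empirical--measure consistency for i.i.d.\ data (cf.\ \cite{resnickbook:2008}). Because $H$ is continuous the map $T(u,v)=(H(u),v)$ is continuous, and \eqref{eqn:almostCopula} says exactly that $\mu^{*}\circ T^{-1}=L$ on $[0,1]\times[1,\infty]$, so the continuous--mapping theorem for vague convergence yields
\[
\frac1k\sum_{j=1}^n\epsilon_{\left(H\left(\frac{X_j-\beta(n/k)}{\alpha(n/k)}\right),\,\frac{Y_j^{*}}{n/k}\right)}\cinP L .
\]

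It then remains to replace both coordinates by their rank versions. For the $Y$--coordinate, Vervaat's lemma applied to the one--dimensional tail empirical quantile function of $Y^{*}$ gives $\frac{i}{k}\cdot\frac{Y^{*}_{(i)}}{n/k}\cinP1$ uniformly over the $i$ for which $Y^{*}_{(i)}/(n/k)$ is bounded away from $0$, so on the region where the second coordinate is $\le M$ — the only indices that matter for a test function supported in $[0,1]\times[1,M]$ — one may replace $Y^{*}_{(i)}/(n/k)$ by $(k+1)/i$. For the $X$--coordinate, $R_i/k$ is the value at $X_i^{*}$ of the empirical distribution function $\hat H_k$ of the first $k$ concomitants; by \eqref{eqn:constd} together with the continuity of $H$ (P\'olya/Glivenko--Cantelli) one has $\sup_x|\hat H_k(x)-H(x)|\cinP0$, whence $R_i/k$ may be replaced by $H((X_i^{*}-\beta(n/k))/\alpha(n/k))$ uniformly in $i\le k$. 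Feeding these two uniform approximations into $\int f\,d(\cdot)$ for an arbitrary nonnegative continuous $f$ of compact support and invoking the uniform continuity of $f$ shows that the rank--based empirical measure shares the vague limit $L$, which is the assertion.

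The main obstacle is the $X$--coordinate replacement. The concomitants $X_1^{*},\ldots,X_k^{*}$ are \emph{not} i.i.d.: they arise from conditioning on the randomly chosen top--$k$ values of $Y$, and the index set ``top $k$'' is itself random, so the uniform consistency $\sup_x|\hat H_k(x)-H(x)|\cinP0$ needs an argument beyond a direct Glivenko--Cantelli appeal, controlling the dependence induced by the order--statistic conditioning and the edge effects for the very largest $Y$'s (small $i$) and near the boundary of $[0,1]\times[1,\infty]$. Carrying out the \emph{joint} uniform replacement of both coordinates simultaneously, rather than one margin at a time, is the delicate step.
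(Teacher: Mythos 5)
Your overall architecture---tail empirical measure in standardized coordinates, push-forward of the first coordinate by $H$, then replacement of both coordinates by rank surrogates---is the same as the paper's, but your proof has a genuine gap, and it is exactly the one you concede in your final paragraph: the justification of the uniform consistency $\sup_x\,|\hat H_k(x)-H((x-\beta(n/k))/\alpha(n/k))|\cinP 0$ needed to trade $H\bigl((X_i^*-\beta(n/k))/\alpha(n/k)\bigr)$ for $R_i/k$. In the body of the argument you derive this from \eqref{eqn:constd} ``together with the continuity of $H$ (P\'olya/Glivenko--Cantelli)''. That appeal is invalid: \eqref{eqn:constd} is a statement about the law of a single pair $(X,Y)$, whereas $\hat H_k$ is built from the concomitants $X_1^*,\dots,X_k^*$, which are not an i.i.d.\ sample from any fixed distribution---they are selected by the random index set of the top $k$ values of $Y_1,\dots,Y_n$, with $k=k(n)$ growing. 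So the central step is asserted rather than proved, and the repair you envisage (a new argument ``controlling the dependence induced by the order-statistic conditioning'') points in an unnecessarily hard direction.

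The gap is filled with no control of that dependence at all, because the uniform consistency is a \emph{corollary} of a two-dimensional empirical convergence you essentially already possess. After the second-coordinate replacement one has \eqref{eqn:sharpsharp}, i.e.\ $\frac1k\sum_{i}\epsilon_{(\hat X_i^*,\,(k+1)/i)}\Rightarrow\mu^*$ with $\hat X_i^*:=(X_i^*-\beta(n/k))/\alpha(n/k)$; evaluating this on the sets $[-\infty,x]\times(1,\infty]$ gives precisely $H_n(x):=\frac1k\sum_{i\le k}\one_{\{\hat X_i^*\le x\}}\Rightarrow H(x)$, which is \eqref{eqn:nonumber}, and P\'olya's theorem (continuity of $H$) upgrades this to uniformity in probability. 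Moreover the ``delicate joint replacement'' you worry about can be bypassed entirely: since $R_i/k$ is \emph{exactly} $H_n(\hat X_i^*)$, the rank measure is exactly the image of $\frac1k\sum_i\epsilon_{(\hat X_i^*,\,(k+1)/i)}$ under the map $(x,y)\mapsto(H_n(x),y)$. Hence it suffices to prove that the composition map $T_2(m,G)$ defined by $T_2(m,G)(f)=\iint f(G(x),y)\,m(dx,dy)$ is continuous at pairs $(m,G)$ with $G$ continuous, and to apply it to the joint convergence $\bigl(\frac1k\sum_i\epsilon_{(\hat X_i^*,\,(k+1)/i)},H_n\bigr)\Rightarrow(\mu^*,H)$; this is how the paper concludes, with no approximation of ranks and no test-function estimate. (Your second-coordinate step, replacing $Y^*_{(i)}/(n/k)$ by $(k+1)/i$ via Vervaat's lemma, is sound in spirit and corresponds to the paper's a.s.-continuous map $T_1$ composing the measure with the empirical quantile path.) If you substitute these two ingredients---uniform consistency of $H_n$ deduced from the two-dimensional convergence rather than from Glivenko--Cantelli, and the continuity of $T_2$ in place of the joint uniform replacement---your outline becomes a complete proof.
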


\begin{proof}
From \eqref{eqn:tovag} and
\cite[Theorem 5.3(ii)]{resnickbook:2007}, as $n,k \to \infty$ with $\frac
kn\to 0$,
\begin{align}
 \frac 1k \sum\limits_{i=1}^n \epsilon_{\big(\frac{X_i-
\beta (n/k)}{\alpha(n/k)} , \frac{Y_i - b( n/k )}{a( n/k)}\big)}
(\cdot) & \Rightarrow \mu(\cdot), \label{eqn:condnk}
\end{align}
in
$\bM_{+}([-\infty,\infty]\times\overline{\E}^{(\gamma)})$.  Recall $Y_{(1)}
\ge Y_{(2)} \ge \ldots \ge Y_{(n)} $ are the order statistics of
$Y_1,\ldots, Y_n$ in decreasing order and ordering the $Y$'s in
\eqref{eqn:condnk} allows us to write the equivalent statement
\begin{equation}\label{eqn:orderedVer}
  \frac 1k
\sum\limits_{i=1}^n \epsilon_{\big(\frac{X_i^*- \beta
(n/k)}{\alpha(n/k)} , \frac{Y_{(i)} - b( n/k )}{a( n/k)}\big)}
\Rightarrow \mu(\cdot).
\end{equation}
Define the measure $\nu_\gamma$ by
$$\nu_\gamma \bigl((y,\infty]\cap \overline{ \E}^{(\gamma)} \bigr)
=(1+\gamma y)^{-1/\gamma},\quad y\in \E^{(\gamma)},$$
and sometimes, here and elsewhere,
 we sloppily write  $\nu_{\gamma}(y,\infty]$.
Taking marginal convergence
in \eqref{eqn:condnk}, or using \eqref{eqn:doa}, we have
with
that
\begin{align}
\frac 1k \sum\limits_{i=1}^n \epsilon_{ \frac{Y_i - b( n/k) }{a( n/k)}}
(\cdot)  & \Rightarrow \nu_\gamma (\cdot),  \nonumber
\intertext{in $\bM_{+}(\overline{\E}^{(\gamma)}$.  Using an inversion technique
(\citet{resnick:starica:1995, dehaan:ferreira:2006}, \citet[page
82]{resnickbook:2007}), we get}
\frac{Y_{(\lceil (k+1)t \rceil)} -
b(n/k)}{a(n/k)} & \cinP \frac{t^{-\gamma}-1}{\gamma},
  \label{eqn:Ybnk}
\end{align}
in $D_l\bigl((0,\infty],\overline{\E}^{(\gamma)}\bigr)$, the class of
left continuous functions on $(0,\infty]$ with range
$\overline{\E}^{(\gamma)}$ and with finite right limits on $(0,\infty)$.  The convergence in \eqref{eqn:Ybnk} being to a
non-random function, we can append it to the convergence in
\eqref{eqn:orderedVer} to get the following
\cite[p.27]{billingsley:1968}:
\begin{align}
(\mu_n,x_n(t):=&\Bigg(
  \frac 1k
\sum\limits_{i=1}^n \epsilon_{\big(\frac{X_i^*- \beta
(n/k)}{\alpha(n/k)} , \frac{Y_{(i)} - b( n/k )}{a( n/k)}\big)}(\cdot)
 , \frac{Y_{(\lceil (k+1)t \rceil)} - b(n/k)}{a(n/k)} \Bigg)
\nonumber \\
 \Rightarrow &\Big(\mu(\cdot), \frac{t^{-\gamma}-1}{\gamma} \Big)=(\mu,x_\infty(t))
\label{eqn:cond2nk4}
\end{align}
on $\bM_{+}([-\infty,\infty]\times\overline{\E}^{(\gamma)})\times
D_l\bigl((0,\infty],\overline{\E}^{(\gamma)})\bigr)$.

Let $D_l^\downarrow\bigl((0,\infty],\overline{\E}^{(\gamma)})\bigr)$
be the subfamily of
$D_l\bigl((0,\infty],\overline{\E}^{(\gamma)})\bigr)$ consisting of
non-increasing functions and define
$$T_1:\mathbb{M}_+([-\infty,\infty]\times \overline{\E}^{(\gamma)}) \times
D_l^\downarrow \bigl((0,\infty],\overline{\E}^{(\gamma)})\bigr)
\mapsto
\mathbb{M}_+ \bigl([-\infty,\infty]\times (0,\infty]
\bigr)
$$
by
$$T_1(m,x(\cdot))=m^*$$
where
$$m^*([-\infty,x]\times (t,\infty])
=m([-\infty,x]\times (x(t^{-1}),\infty]),\quad x \in
[-\infty,\infty],\,t\in (0,\infty].
$$
This is an a.s. continuous map so apply this to \eqref{eqn:cond2nk4}
and
\begin{equation}\label{eqn:hideUgly}
T_1(\mu_n,x_n)\Rightarrow T_1(\mu,x_\infty).
\end{equation}
For $x \in
[-\infty,\infty]$ and $ y \in (0,\infty]$, the left side of
\eqref{eqn:hideUgly}
on the set $[-\infty,x]\times (y,\infty]$ is
$$
\mu_n\bigl(
[-\infty,x]\times (\frac{Y_{(\lceil (k+1)t \rceil)} -
  b(n/k)}{a(n/k)},\infty]\bigr)$$
and since
$$\frac{Y_{(i)} - b( n/k )}{a( n/k)}>
\frac{Y_{(\lceil (k+1)t \rceil)} -
  b(n/k)}{a(n/k)}
$$
iff
$$i<(k+1)y^{-1}\quad \text{ or } \quad  \frac{k+1}{i}>y,$$
the left side of \eqref{eqn:hideUgly} on the set $[-\infty,x]\times (y,\infty]$ is
$$
  \frac 1k
\sum\limits_{i=1}^n \epsilon_{\big(\frac{X_i^*- \beta
(n/k)}{\alpha(n/k)} , \frac{k+1}{i}\big)}([-\infty,x]\times (y,\infty]).$$
The right side of \eqref{eqn:hideUgly} on the set $[-\infty,x]\times
(y,\infty]$
is
$$\mu\bigl([-\infty,x]\times (\frac{y^\gamma-1}{\gamma},\infty]\bigr)
=\mu^*\bigl([-\infty,x]\times (y,\infty]\bigr),$$
so we conclude
\begin{equation}\label{eqn:sharpsharp}
  \frac 1k
\sum\limits_{i=1}^n \epsilon_{\big(\frac{X_i^*- \beta
(n/k)}{\alpha(n/k)} , \frac{k+1}{i}\big)} \Rightarrow \mu^*
\end{equation}
in $\mathbb{M}_+\bigl([-\infty,\infty]\times (0,\infty]\bigr).$
Recall $\mu^*(\cdot)$ was  defined
in \eqref{eqn:stdizemu}.

Now assuming  $(x,1)$ is a continuity point of $\mu^*$ we have
\begin{align}
H_n(x) :=&
\frac {1}{k} \sum\limits_{i=1}^k \epsilon_{\big(\frac{X_i^*-
\beta (n/k)}{\alpha(n/k)} , \frac {k+1}{i}\big)} ([-\infty,x]
\times(1,\infty]) \nonumber \\
 \Rightarrow &\mu^*([-\infty,x] \times(1,\infty])
 =: H(x),\label{eqn:nonumber}
\end{align}
or in the topology of weak convergence on $PM[-\infty,\infty]$, the
probability measures on
$[-\infty,\infty] $,
$$H_n \Rightarrow H.$$
Define a map $T_2$ on $\bM_{+}([-\infty,\infty]\times [1,\infty])\times
PM[-\infty,\infty] $ by
$$T_2(m,G) = m^\#$$
where
$$
m^\#([0,z]\times (y,\infty])=m([0,G(z)]\times (y,\infty])
$$
or, for $f \in C([0,1]\times [1,\infty])$,
$$m^\# (f)=\iint f(G(x),y) m(dx,dy).$$
This map is continuous at $(m,G)$ provided $G$ is continuous. To see
this, let $f $ be continuous on $[0,1]\times [1,\infty]$ and suppose
$G_n\Rightarrow G$ and $m_n \stackrel{v}{\to} m$. Then
\begin{align*}
\Bigl|\iint f(G_n(x),y)&m_n(dx,dy)
-\iint f(G(x),y)m_n(dx,dy)\Bigr|\\
\leq & \Bigl|\iint f(G_n(x),y)m_n(dx,dy) -
\iint f(G(x),y)m_n(dx,dy)\Bigr|\\
&\qquad +\Bigl|
\iint f(G(x),y)m_n(dx,dy)-\iint f(G(x),y)m(dx,dy)\Bigr|\\
=&I+II.
\end{align*}
For $I$, convergence to $0$ follows by uniform continuity of $f$ and
the fact that $G_n(x)\to G(x)$ uniformly in $x$. To verify $II \to 0$,
it suffices to note that  $f(G(x),y)$ is continuous with compact
support $[-\infty,\infty]\times [1,\infty]$ and then use $m_n
\stackrel{v}{\to} m$.

Combine \eqref{eqn:sharpsharp} and \eqref{eqn:nonumber} to get
\begin{equation}\label{eqn:sharpsharpsharp}
 \Bigl(
 \frac 1k
\sum\limits_{i=1}^k \epsilon_{\big(\frac{X_i^*- \beta
(n/k)}{\alpha(n/k)} , \frac{k+1}{i}\big)} ,
H_n \Bigr)
\Rightarrow (\mu^*,H)
\end{equation}
in $\mathbb{M}_+\bigl([-\infty,\infty]\times [1,\infty]\bigr) \times PM[-\infty,\infty]$.
Apply the transformation $T_2$ discussed in the previous
paragraph. The limit at $[0,x]\times (y,\infty]$ is
$\mu^*\bigl([-\infty,H^\leftarrow (x)]\times
(y,\infty]\bigr)=L[0,x]\times (y,\infty]$. The
converging sequence can be written as
$$\frac 1k \sum_{i=1}^k \epsilon_{
\Bigl(H_n
\bigl( \frac{X_i^*  -\beta(n/k)}{\alpha(n/k) }\bigr)   , \frac{k+1}{i }\Bigr)
}.$$
Finally observe
$$H_n
\bigl( \frac{X_i^*  -\beta(n/k)}{\alpha(n/k) }\bigr)
=\frac 1k  \sum_{l=1}^k 1_{[\frac{X_l^* -\beta(n/k)}{\alpha(n/k)} \leq
\frac{X_i^* -\beta(n/k)}{\alpha(n/k)}]} =\frac{R_i}{k}.$$
The result follows.
\end{proof}

We propose  three statistics that can be used to detect whether or not a CEV model is appropriate, and if so, whether the model has a product measure in the limit.
 \subsection{The Hillish statistic, $\Hillish$}
The Hill estimator (\cite{hill:1975,mason:1982,dehaan:ferreira:2006,resnickbook:2007}) is a popular choice
for estimating the tail parameter $\alpha$ of a heavy-tailed
distribution. We say that a distribution function $F$ on $\R$ is
heavy-tailed with tail parameter $\alpha > 0$ if
      \begin{align} \label{eqn:defht}
      \lim\limits_{t\to \infty} \frac{1-F(tx)}{1-F(t)} = x^{-\alpha} \qquad \text{for} ~~~ x>0.
      \end{align}
If $Z_1,Z_2,\ldots,Z_n$ are i.i.d from this distribution
$F$ and $Z_{(1)} \ge Z_{(2)} \ge \ldots \ge Z_{(n)}$ are the orders
statistics of the sample in decreasing order,
then the Hill estimator defined as
$$
\Hill =
\frac{1}{k}\sum\limits_{j=1}^{k} \log
\frac{Z_{(i)}}{Z_{(k+1)}}$$
is a weakly consistent estimator of $\frac{1}{\alpha}$  as $k,n\to
\infty, k/n \to 0$.  One way to obtain the consistency is to integrate
the tail empirical measure and using its consistency. See
\citet{resnick:starica:1995} or
\citet[p. 81]{resnickbook:2007}.

The Hillish statistic, based on the ranks of the sample, converges weakly to a constant limit under the CEV model.  The name is derived from the similarity of proof of this convergence with that of the weak consistency of the Hill estimator. Using the notation defined just prior to Section 2.1, and assuming   $(\bX,\bY):=\{(X_1,Y_1),(X_2,Y_2),\ldots,(X_n,Y_n)\}$, the Hillish statistic for $(\bX,\bY)$ is defined as
 \begin{align}\label{def:bihill}
 \Hillish(\bX,\bY) := \frac{1}{k} \sum\limits_{j=1}^{k} \log \frac{k}{R_{j}}\log
\frac{k}{j}.
 \end{align}
 The following proposition provides a convergence result of the Hillish statistic under conditions on $k$.
\begin{Proposition}\label{prop:hillishconv}
Suppose $(X_1,Y_1),(X_2,Y_2),\ldots,(X_n,Y_n)$ are i.i.d. observations
from a CEV model which follows \eqref{eqn:doa}-\eqref{eqn:Hispdf} and suppose $H$ as defined in \eqref{eqn:Hispdf}
is continuous. Assume that $k=k(n) \to \infty, \, n
\to \infty$ and $k/n\to 0$. Then
\begin{align} \label{eqn:hilllim}
 \Hillish \cinP \int\limits_{1}^{\infty}
\int\limits_{1}^{\infty}\mu^{*}([-\infty,\Hinv(\frac
1x)]\times(y,\infty]) \frac{dx}{x}\frac{dy}{y} = : I_{\mu^*}.
\end{align}
\end{Proposition}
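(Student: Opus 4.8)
\emph{Proof proposal.} The plan is to realize $\Hillish$ as the integral of an unbounded function against the empirical measure of Proposition~\ref{prop:empconv} and to transport the convergence there through a truncation argument. Write
$$\nu_n:=\frac1k\sum_{i=1}^k\epsilon_{(\frac{R_i}{k},\frac{k+1}{i})},\qquad f(u,v):=\log\frac1u\,\log v,\quad(u,v)\in[0,1]\times[1,\infty],$$
so that $\int f\,d\nu_n=\frac1k\sum_{j=1}^k\log\frac{k}{R_j}\log\frac{k+1}{j}$. Comparing with \eqref{def:bihill}, $\int f\,d\nu_n-\Hillish=\log\frac{k+1}{k}\cdot\frac1k\sum_{j=1}^k\log\frac{k}{R_j}$. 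Since $H$ is continuous there are a.s.\ no ties among the concomitants, so $(R_1,\dots,R_k)$ is a permutation of $(1,\dots,k)$ and $\frac1k\sum_j\log\frac{k}{R_j}=\frac1k\sum_{r=1}^k\log\frac kr\to\int_0^1\log\frac1u\,du=1$, while $\log\frac{k+1}{k}\to0$; hence this difference is a deterministic $o(1)$ and it suffices to show $\int f\,d\nu_n\cinP I_{\mu^*}$.

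Next I would identify the target. Using $\log\frac1u=\int_u^1 s^{-1}\,ds$, $\log v=\int_1^v t^{-1}\,dt$ and Tonelli,
$$\int f\,dL=\int_0^1\int_1^\infty\frac{L([0,s]\times(t,\infty])}{st}\,ds\,dt;$$
inserting $L([0,s]\times(t,\infty])=\mu^*([-\infty,\Hinv(s)]\times(t,\infty])$ from \eqref{eqn:almostCopula} and substituting $s=1/x$ gives $\int f\,dL=I_{\mu^*}$. This limit is finite because $\mu^*([-\infty,\Hinv(1/x)]\times(y,\infty])\le\min(1/x,1/y)$ (bounding the first set by $[-\infty,\infty]$ and the second by $(1,\infty]$), whence $I_{\mu^*}\le\int_1^\infty\int_1^\infty\min(1/x,1/y)\,\frac{dx}{x}\frac{dy}{y}<\infty$.

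The heart of the matter, and the main obstacle, is that $f$ is unbounded both as $u\downarrow0$ and as $v\uparrow\infty$, so Proposition~\ref{prop:empconv} cannot be applied to $f$ directly. I would truncate: for $0<\delta<1<M$ set $f_{\delta,M}(u,v):=(\log\frac1u\wedge\log\frac1\delta)(\log v\wedge\log M)$, which extends to a bounded continuous function on the compact space $[0,1]\times[1,\infty]$; since on a compact space the vague convergence of Proposition~\ref{prop:empconv} tests against all continuous functions, this yields $\int f_{\delta,M}\,d\nu_n\cinP\int f_{\delta,M}\,dL$ for each fixed $\delta,M$ (the deterministic limit making weak convergence the same as convergence in probability). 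Since $0\le f-f_{\delta,M}\le f\,(\one_{\{u<\delta\}}+\one_{\{v>M\}})$, everything reduces to controlling the mass of $\nu_n$ that $f$ sees near $u=0$ and near $v=\infty$.

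The decisive estimate bounds these two contributions uniformly in $n$ via Cauchy--Schwarz together with the permutation property of the ranks, so that no information on the joint law of $(R_i,i)$ is required. For the part near $u=0$,
$$\int_{\{u<\delta\}}f\,d\nu_n\le\Big(\frac1k\sum_{r=1}^{\lfloor\delta k\rfloor}(\log\tfrac kr)^2\Big)^{1/2}\Big(\frac1k\sum_{i=1}^k(\log\tfrac{k+1}{i})^2\Big)^{1/2},$$
whose second factor tends to $\sqrt2=(\int_0^1(\log\frac1u)^2\,du)^{1/2}$ and whose first factor tends to $(\int_0^\delta(\log\frac1u)^2\,du)^{1/2}\to0$ as $\delta\downarrow0$; symmetrically, the part near $v=\infty$ is at most $(\frac1k\sum_{r=1}^k(\log\frac kr)^2)^{1/2}(\frac1k\sum_{i\le(k+1)/M}(\log\frac{k+1}{i})^2)^{1/2}$, whose second factor tends to $(\int_0^{1/M}(\log\frac1u)^2\,du)^{1/2}\to0$ as $M\uparrow\infty$. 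On the limit side $\int(f-f_{\delta,M})\,dL\to0$ as $\delta\downarrow0,\,M\uparrow\infty$ by dominated convergence since $I_{\mu^*}<\infty$. A standard truncation argument---first choose $\delta,M$ to make both tail bounds small, then let $n\to\infty$---then gives $\int f\,d\nu_n\cinP I_{\mu^*}$. I expect the tail bookkeeping (establishing the two uniform bounds and the dominated-convergence step) to be the only real work, the truncated convergence being immediate from compactness.
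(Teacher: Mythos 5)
Your proposal is correct and is essentially the paper's own argument in dual form: both realize $\Hillish$ (up to a deterministic $o(1)$) as an integral against the empirical rank measure of Proposition~\ref{prop:empconv}, both truncate — you cap the integrand $f$ at levels $\delta,M$, while the paper equivalently truncates the Fubini-dual domain of integration to $[1,N]^2$ — and both kill the tail contributions with the identical Cauchy--Schwarz estimate whose key point is that the ranks $(R_1,\dots,R_k)$ form a permutation of $(1,\dots,k)$, making the bound deterministic. The remaining differences are cosmetic packaging: you use the continuous mapping theorem for bounded continuous functions on the compact space plus dominated convergence, where the paper uses pointwise convergence with Pratt's lemma plus Fatou and a converging-together theorem.
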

\begin{proof}
Proposition \ref{prop:empconv} yields
\begin{align}\label{eqn:prop2.1inv}
 \frac 1k \sum\limits_{i=1}^n \epsilon_{\big(\frac{R_i}k , \frac
{k+1}i \big)} (\cdot)  & \Rightarrow
L(\cdot)
\end{align}
in $\bM_{+}\bigl([0,1]\times [1,\infty]\bigr)$.
Rewrite \eqref{eqn:prop2.1inv} for $x\ge1,y>1$ as
 \begin{align}
    \mu^*_n([x,\infty] \times(y,\infty])&:=  \frac
1k \sum\limits_{i=1}^n \epsilon_{\big(\frac k{R_i} , \frac
{k+1}i \big)} ([x,\infty] \times(y,\infty]) \Rightarrow
\mu^*([-\infty,H\inv (1/x)] \times(y,\infty]).
\label{eqn:cond2nk8}            \end{align}
 Observe that
  \begin{align}
 I_n & :=  \int\limits_1^{\infty}\int\limits_1^{\infty}
 \mu^*_n ([x,\infty] \times(y,\infty]) \frac{dx}{x}
\frac{dy}{y}
 = \frac 1 k \int\limits_1^{\infty}\int\limits_1^{\infty}
\sum\limits_{i=1}^n \epsilon_{\big(\frac k{R_i} , \frac {k+1}i
\big)} ([x,\infty] \times(y,\infty]) \frac{dx}{x}
\frac{dy}{y}\nonumber\\
& =  \frac 1k  \sum\limits_{i=1}^k \log \frac {k}{R_i} \log
\frac{k+1}{i}
 = \frac 1k  \sum\limits_{i=1}^k \log \frac {k}{R_i} (\log
\frac{k}{i} + \log \frac {k+1}{k})\nonumber\\
& =  \frac 1k  \sum\limits_{i=1}^k \log \frac {k}{R_i} \log
\frac{k}{i} + \big(\log \frac{k+1}{k}\big)  \frac 1k
\sum\limits_{i=1}^k \log
\frac {k}{R_i}
 = \Hillish + A_k \label{eqn:HkAk}
\end{align}
where  $A_k:= \big(\log\frac{k}{k+1}\big) \frac 1k
\sum\limits_{i=1}^k \log \frac k i \to 0\times 1=0$ as $k \to
\infty$. Hence if we show $$I_n \cinP \int\limits_{1}^{\infty}
\int\limits_{1}^{\infty}\mu^{*}([-\infty,\Hinv(\frac
1x)]\times(y,\infty]) \frac{dx}{x}\frac{dy}{y},$$ then we are done.
For $N$ finite we know that
\begin{align}
 \int\limits_1^N\int\limits_1^N
 \mu^*_n ([x,\infty] \times(y,\infty]) \frac{dx}{x}
\frac{dy}{y} \cinP  \int\limits_1^N\int\limits_1^N
 \mu^*([-\infty,H\inv(1/x)] \times(y,\infty]) \frac{dx}{x}
\frac{dy}{y}, \label{eqn:munstarmu}
\end{align} since \eqref{eqn:cond2nk8} implies that the
integrand converges in probability and we can use Pratt's Lemma
\citep[page 164]{resnick:1999book} for the convergence of the integral.
Note that  as $N \to \infty$ the right hand side in equation
\eqref{eqn:munstarmu} converges to
$\int\limits_{1}^{\infty}
\int\limits_{1}^{\infty}\mu^{*}([-\infty,\Hinv(\frac
1x)]\times(y,\infty]) \frac{dx}{x}\frac{dy}{y}$.
So we need to see what happens outside the compact sets. In
particular if we can show that for any $\delta >0$,
\begin{align}
& \lim\limits_{N\to \infty} \limsup\limits_{n\to \infty}
\P\Bigg(\int\limits_N^{\infty}\int\limits_1^{\infty}
 \mu^*_n ([x,\infty] \times(y,\infty]) \frac{dx}{x}
\frac{dy}{y} > \delta\Bigg) = 0 \label{eqn:hillconv1}\\
\text {and}\qquad & \lim\limits_{N\to \infty} \limsup\limits_{n\to
\infty} \P\Bigg(\int\limits_1^{\infty}\int\limits_N^{\infty}
 \mu^*_n ([x,\infty] \times(y,\infty]) \frac{dx}{x}
\frac{dy}{y} > \delta\Bigg) = 0  \label{eqn:hillconv2}
\end{align}
then by a standard converging together theorem \citep[Theorem
3.5]{resnickbook:2007}, we are done. Observe that
\begin{align*}
0 & \le \int\limits_1^{\infty}\int\limits_N^{\infty}
 \mu^*_n ([x,\infty] \times(y,\infty]) \frac{dx}{x}
\frac{dy}{y}   =\frac{1}{k} \sum\limits_{j=1}^{k} \log
\frac{k}{R_{j}}(\log 1 \vee \log \frac{k}{jN})\\
& \le \sqrt{\frac{1}{k} \sum\limits_{j=1}^{k} \big(\log
\frac{k}{R_{j}}\big)^2 \frac{1}{k}\sum\limits_{j=1}^{k} \big(\log1
\vee \log \frac{k}{jN}\big)^2} \qquad \text{(Cauchy-Schwarz)}\\
& = B_{k_n} \times C_{k_n,N} \intertext{where} B_{k_n}^2 & =
\frac{1}{k}\sum\limits_{j=1}^{k} \big( \log \frac{k}{R_j}\big)^2 =
\frac{1}{k}\sum\limits_{j=1}^{k} \big( \log \frac{k}{j}\big)^2  \sim
\int\limits_0^1 (-\log x)^2 dx=2 \intertext{and} C_{k_n,N}^2 & =
\frac{1}{k}\sum\limits_{j=1}^{k} \big( 0 \vee \log
\frac{k}{jN}\big)^2 = \frac{1}{k}\sum\limits_{j \le  k/N} \big(
\log \frac{k}{jN}\big)^2\\
& = \frac{1}{N} \frac{1}{k/N} \sum\limits_{j=1}^{ k/N} \big( \log
\frac{k}{jN}\big)^2 \sim \frac 1N \int\limits_0^1 (-\log x)^2 dx=
\frac 1N \times 2.
\end{align*}
Hence
\begin{align*}
\limsup\limits_{n\to \infty}
\P\Bigl[\int\limits_1^{\infty}\int\limits_N^{\infty} &
 \mu^*_n ([x,\infty] \times(y,\infty]) \frac{dx}{x}
\frac{dy}{y}> \delta \Bigr]  \le \limsup\limits_{n\to \infty}
\P(B_{k_n}\times C_{k_n,N}>\delta)\\
\intertext{and applying Fatou's Lemma, this is bounded by}
& \le \P\Bigl[\sqrt{2\times\frac{2}{N}} > \delta \Bigr]
 \to 0 \qquad (N \to \infty).
 \end{align*}
 This shows \eqref{eqn:hillconv2} holds and similarly we can show
 \eqref{eqn:hillconv1} holds, and we are done.
\end{proof}
Suppose $(\bX,\bY):=\{(X_1,Y_1),(X_2,Y_2),\ldots,(X_n,Y_n)\}$ is a
sample from a CEV limit model with normalizing functions $\alpha,
\beta, a,b$ and variational functions $\psi_1, \psi_2$. Let the
standardized limit measure be $\mu^*$ as defined in
\eqref{eqn:stdizemu}. Also $H(x)=
\mu^*([-\infty,x]\times(1,\infty])$. Then $(-\bX,\bY)$ is also a
sample from a  CEV limit model but with normalizing functions
$\tilde{\alpha}=\alpha, \tilde{\beta}= -\beta, \tilde{a}=a
,\tilde{b}=b$ and variational functions $\tilde{\psi_1}=\psi_{1},
\tilde{\psi_2}=-\psi_{2}$. In this case the standardized limit measure
is $\tilde{\mu^*}$ and it is easy to check that for $x \in \R, \, y > 0$,
\begin{align}
\tilde{\mu^*}([-\infty,x]\times(y,\infty]) & = \mu^*([-x,\infty]\times(y,\infty]) . \label{eqn:hatmumu}
\end{align}

We also have for $x \in \R$
\begin{align}
\tilde{H}(x) := \tilde{\mu^*}([-\infty,x]\times(1,\infty])
            = {\mu^*}([-x,\infty]\times(1,\infty])
           = 1-H(-x).\label{eqn:hatHH}
\end{align}
Thus, for $0<p<1$, we have,
\begin{align}
\tHinv(p) = - \Hinv(1-p).   \label{eqn:hatHinvHinv}
\end{align}

The following proposition characterizes product measure in terms of limits of the Hillish statistic for both $(\bX,\bY)$ and $(-\bX,\bY)$.
\begin{Proposition} \label{prop:hillishprod} Under the conditions of
Proposition \ref{prop:hillishconv}, $\mu^*$ is a product measure if and only if both \begin{align*}
\Hillish(\bX,\bY) \cinP 1 \quad\text{and} \quad \Hillish(-\bX,\bY) \cinP 1.
\end{align*}
\end{Proposition}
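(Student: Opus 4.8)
The two Hillish limits in Proposition~\ref{prop:hillishconv} are deterministic, so the assertion is equivalent to the purely analytic statement that $\mu^*$ is a product measure if and only if $I_{\mu^*}=1$ and $I_{\tilde{\mu^*}}=1$, where $I_{\tilde{\mu^*}}$ is the functional of \eqref{eqn:hilllim} evaluated at the reflected measure $\tilde{\mu^*}$ of \eqref{eqn:hatmumu}. The plan is first to recast $I_{\mu^*}$ through the copula $C:=L\circ T_0^{-1}$ attached to $L$ in \eqref{eqn:almostCopula}. Noting that $\mu^*([-\infty,\Hinv(1/x)]\times(y,\infty])=C(1/x,1/y)$ and substituting $x\mapsto 1/x$, $y\mapsto 1/y$ in \eqref{eqn:hilllim} turns the integrand into $C(u,v)/(uv)$ on $[0,1]^2$; then Fubini (legitimate by nonnegativity) gives
\begin{align*}
 I_{\mu^*}=\int_0^1\!\!\int_0^1 \frac{C(u,v)}{uv}\,du\,dv=\E\bigl[\log U\,\log V\bigr],
\end{align*}
where $(U,V)$ has uniform margins and copula $C$. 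Because $X\mapsto-X$ sends $C$ to the copula of $(1-U,V)$ by \eqref{eqn:hatmumu}, the same computation yields $I_{\tilde{\mu^*}}=\E[\log(1-U)\,\log V]$. Since $\E\log U=\E\log(1-U)=\E\log V=-1$, the two hypotheses are exactly $\mathrm{Cov}(\log U,\log V)=0$ and $\mathrm{Cov}(\log(1-U),\log V)=0$.

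For the forward implication I would argue that if $\mu^*$ is a product then, since standardization forces the $Y^*$-marginal $\mu^*([-\infty,\infty]\times(y,\infty])=1/y$ while $H$ is the $X$-marginal, one gets $C(u,v)=uv$; hence both covariances vanish and both limits equal $1$. As the reflection of a product is again a product, $I_{\tilde{\mu^*}}=1$ as well.

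The converse is the crux, since vanishing covariance does not in general entail independence, so I would bring in the CEV scaling structure. The variational property \eqref{eqn:condpsi1psi2}, together with $\psi_1(c)=c^{\rho}$ and $\psi_2(c)\in\{0,\,k(c^{\rho}-1)/\rho\}$, yields the homogeneity relation
\begin{align*}
 \mu^*\bigl([-\infty,c^{\rho}x+\psi_2(c)]\times(cy,\infty]\bigr)=c^{-1}\mu^*\bigl([-\infty,x]\times(y,\infty]\bigr),
\end{align*}
which (a short computation shows) pins $C$ to the one-parameter family
\begin{align*}
 C(u,v)=v\,H\!\Bigl(v^{\rho}\Hinv(u)+k\tfrac{v^{\rho}-1}{\rho}\Bigr),
\end{align*}
the product case being exactly $\rho=0,\;k=0$. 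The goal is then to show that $\mathrm{Cov}(\log U,\log V)=\mathrm{Cov}(\log(1-U),\log V)=0$ forces $\rho=0$ and $k=0$.

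The hard part will be the case $\rho\neq0$. When $\rho=0$ the difference $C(u,v)-uv$ keeps a constant sign (the model is positively or negatively quadrant dependent according to the sign of $k$), so the Hoeffding covariance identity $\mathrm{Cov}(\log U,\log V)=\iint_{[0,1]^2}\bigl(C(u,v)-uv\bigr)\frac{du\,dv}{uv}$ makes even a single condition force $k=0$. For $\rho\neq0$, however, $C(u,v)-uv$ changes sign in $u$ (its sign follows that of $\Hinv(u)(v^{\rho}-1)$), so the covariance can vanish through cancellation without independence, which is precisely why both reflected conditions are needed. The remaining work is to prove that $X\mapsto-X$ reverses the net sign of the deviation $I_{\mu^*}-1$ along this family, so that $I_{\mu^*}=1$ and $I_{\tilde{\mu^*}}=1$ can hold simultaneously only at the independence point $\rho=0,\,k=0$; establishing this strict sign/monotonicity behaviour of the functional in $(\rho,k)$ for a general continuous $H$ is where the real effort lies.
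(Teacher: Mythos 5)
Your reduction to the analytic statement, the covariance reformulation via Hoeffding's identity ($I_{\mu^*}-1=\mathrm{Cov}(\log U,\log V)$), the forward direction, and the case $\rho=0$ (constant-sign deviation, so one condition already forces $k=0$) are all sound and essentially parallel to the paper's argument. But the converse for $\rho\neq 0$ — which you yourself call the crux — is not proved: you end with a plan ("show that $X\mapsto -X$ reverses the net sign of the deviation $I_{\mu^*}-1$") rather than an argument, so the proposal has a genuine gap exactly where the real content of the proposition lies. Moreover, the plan as stated is not the right invariant: since the hypothesis is $I_{\mu^*}=1$, "sign reversal" of $I_{\mu^*}-1$ under reflection would yield $I_{\tilde{\mu^*}}-1=0$ as well, which is the opposite of what you need. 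Also, your description of the sign structure is off: with $C(u,v)-uv = v\bigl[H\bigl(v^{\rho}\Hinv(u)+\tfrac{k}{\rho}(v^{\rho}-1)\bigr)-H(\Hinv(u))\bigr]$, monotonicity of $H$ shows the sign follows $(v^{\rho}-1)\bigl(\Hinv(u)+\tfrac{k}{\rho}\bigr)$, so the sign change in $u$ occurs at $u=H(-k/\rho)$, not where $\Hinv(u)=0$; this matters because the whole argument hinges on locating that crossing point.

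What the paper actually does in the case $\rho>0$ (with $\rho<0$ symmetric): writing $\chi(x,y)=H\bigl((\tfrac1y)^{\rho}\Hinv(\tfrac1x)+\tfrac{k}{\rho}((\tfrac1y)^{\rho}-1)\bigr)$ and $\delta=1/H(-k/\rho)$, the deviation $\tfrac1x-\chi(x,y)$ is nonnegative for $x\le\delta$ and nonpositive for $x\ge\delta$, and $I_{\mu^*}=1$ says its weighted integral vanishes, i.e.\ the mass $\Delta$ on $[1,\delta]$ balances the mass on $[\delta,\infty)$. The key trick you are missing is that the reflected condition $I_{\tilde{\mu^*}}=1$, after using \eqref{eqn:hatHinvHinv} and the substitution $z=x/(x-1)$, becomes the \emph{same} integral identity for the \emph{same} function $\chi$ but reweighted by the strictly decreasing factor $1/(x-1)$. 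Since that weight exceeds $1/(\delta-1)$ on $[1,\delta]$ and is below it on $[\delta,\infty)$, the reweighted identity forces strict inequalities $>\Delta/(\delta-1)$ and $<\Delta/(\delta-1)$ on the two sides (strictness because the deviation cannot vanish a.e.\ when $\rho\neq0$), a contradiction. Without this comparison step (or a genuine substitute for it), your proof is incomplete.
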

\begin{proof}
Evaluating $I_{\mu^*}$, the limit of the Hillish statistic as proposed in Proposition \ref{prop:hillishconv},
leads us to the above results. Recall that we
assume $H$ is continuous. Define for any $c>0$, the family $H^{(c)}(\cdot)$ of distribution functions as follows:
\begin{align*}
H^{(c)}(x)  := c^{-1} \mu^*([-\infty,x]\times(c^{-1},\infty])
 = H(\psi_1(c) x +\psi_2(c))
\end{align*}
where $\psi_1,\psi_2$ are as defined in \eqref{eqn:condpsi1psi2} and
the second equality can be obtained by using $tc$
instead of $t$ in the CEV model property \eqref{eqn:cond2} \citep[page 543]{heffernan:resnick:2007}. Note that
$H^{(1)} \equiv H$ according to our definition. Now,
\begin{align}
\mu^{*}([-\infty,\Hinv(\frac 1x)]\times(y,\infty])
 =& \frac{1}{y} \times y \mu^{*}([-\infty,\Hinv(\frac
 1x)]\times(y,\infty])\nonumber \\
 =& \frac 1y \times H(\psi_1(1/y) H\inv(\frac 1x) + \psi_2(1/y)). \label{eqn:muhhinv}
\end{align}
\begin{enumerate}
\item If $\mu^*$ is a product measure then $\mu^* = H \times \nu_1$
where $\nu_1\big((x,\infty]\big) = x^{-1}, x >0$. Similarly,  $\tilde{\mu^*} = \tilde{H} \times \nu_1$. We know that $\mu^*$
being a product measure is equivalent to $\psi_1\equiv
1,\psi_2\equiv 0$. Thus $H^{(c)}\equiv H$ for any $c>0$.
Thus
\begin{align*}
I_{\mu^*} & = \int\limits_1^{\infty}\int\limits_1^{\infty} \mu^*([-\infty,\Hinv(\frac{1}{x})]\times(y,\infty]\big) \frac{dx}{x}\frac{dy}{y}
  = \int\limits_1^{\infty}\int\limits_1^{\infty} \frac{1}{y}
 H\big(\Hinv(\frac{1}{x})\big) \frac{dx}{x}\frac{dy}{y}\\
 & = \int\limits_1^{\infty}\int\limits_1^{\infty} \frac{1}{y}
 \frac 1x  \frac{dx}{x}\frac{dy}{y}
 = \Big(\int\limits_1^{\infty}
 \frac 1 {x^2} dx\Big)^2 =1.
 \end{align*}
Also
 \begin{align*}
I_{\tilde{\mu^*}} & = \int\limits_1^{\infty}\int\limits_1^{\infty} \tilde{\mu^*}([-\infty,\tHinv(\frac{1}{x})]\times(y,\infty]\big) \frac{dx}{x}\frac{dy}{y}
 = \int\limits_1^{\infty}\int\limits_1^{\infty} \mu^*([-\tHinv\big(1\frac{1}{x}\big)]\times(y,\infty]\big) \frac{dx}{x}\frac{dy}{y}\\
 & = \int\limits_1^{\infty}\int\limits_1^{\infty} \frac{1}{y}
 \big(1-H\big(\Hinv(1-\frac{1}{x}\big)\big) \frac{dx}{x}\frac{dy}{y}
 = \Big(\int\limits_1^{\infty}
 \frac 1 {x^2} dx\Big)^2 =1.
 \end{align*}
 \item  Conversely assume that $I_{\mu^*}=I_{\tilde{\mu^*}}=1$.
 We know that $\psi_1(c)= c^{\rho}$ for some $\rho \in \R$. Let us
consider the following cases:
\begin{enumerate}
\item $\rho=0$.
This means $\psi_1 \equiv 1$ and $\psi_2(c) = k \log c$ for some $k \in \R$. We will show that $k$ must be $0$. If $k>0$, then
\begin{align*}
I_{\mu^*} & = \int\limits_1^{\infty}\int\limits_1^{\infty} \mu^*([-\infty,\Hinv(\frac{1}{x})]\times(y,\infty]\big) \frac{dx}{x}\frac{dy}{y}
 = \int\limits_1^{\infty}\int\limits_1^{\infty} \frac{1}{y}
 H\big(\Hinv\big(\frac{1}{x}\big) - k \log y)\big) \frac{dx}{x}\frac{dy}{y}\\
 & < \int\limits_1^{\infty}\int\limits_1^{\infty} \frac{1}{y}
 \frac 1x  \frac{dx}{x}\frac{dy}{y}
                     = \Big(\int\limits_1^{\infty}
 \frac 1 {x^2} dx\Big)^2 =1.
\end{align*}
Similarly, we can show
\begin{align*}
  I_{\mu^*} & \begin{cases}  =1  & \text{if} \quad k  = 0\\
                     >1  & \text{if} \quad k  < 0.
 \end{cases}
 \qquad \text{and} \qquad  I_{\tilde{\mu^*}}  \begin{cases}   > 1  & \text{if} \quad k  > 0\\
  =1  & \text{if} \quad k  = 0\\
                     < 1  & \text{if} \quad k  < 0.
 \end{cases}
 \end{align*}
  Thus for $I_{\mu^*}=I_{\tilde{\mu^*}}=1$ to hold, we must have $k=0$, which implies $\psi_2\equiv 0$ and $\mu^*$ becomes a product measure.\\

\item  $\rho \neq 0$.
We will show that this is not possible under the assumption $I_{\mu^*}=I_{\tilde{\mu^*}}=1$.
For $c>0$,
\[ \psi_1(c)=c^{\rho}, \qquad \psi_2(c) = \frac{k}{\rho} (c^{\rho}-1)\]
for some $k \in \R$. Assume first $\rho > 0$.
 Then $(\frac 1y)^{\rho} \le 1$ for $y \ge 1$.
Therefore, for such $y$,
\begin{align}
& (\frac 1y)^{\rho} H\inv(\frac 1x) + \frac{k}{\rho} ((\frac 1y)^{\rho}-1) \le
H\inv(\frac 1x) \quad
\text{iff} \quad H\inv(\frac 1x) +\frac{k}{\rho} \ge 0 \nonumber \\
& \text{iff} \quad  x \le 1/H(-\frac{k}{\rho})=: \delta, \qquad \delta \ge 1. \label{def:delta}
\end{align}
Denote
\begin{align} \label{def:chi}
\chi(x,y) := H\big((\frac
1y)^{\rho} H\inv(\frac 1x) + \frac{k}{\rho} ((\frac 1y)^{\rho}-1) \big), \qquad x\ge1, y\ge 1.
\end{align}

Since $H$ is non-decreasing
\begin{align}
  \frac{1}{y} \chi(x,y)  & = \mu^*([-\infty,H^\leftarrow (\frac 1x) ]\times(y,\infty])
=\frac 1y H\big((\frac
1y)^{\rho} H\inv(\frac 1x) + \frac{k}{\rho} ((\frac 1y)^{\rho}-1) \big) \nonumber \\ &\le \frac 1y H\big(H\inv(\frac
1x)\big) = \frac 1x \cdot \frac 1y  \qquad \qquad \text{iff $x\le \delta, y \ge 1$.
}\label{eqn:mu*1byxy}
\end{align}
Since $I_{\mu^*}=1$, we have
\begin{align}
&\int\limits_1^{\infty} \int\limits_1^{\infty} \frac{1}{y} \chi(x,y) \frac{dx}{x}\frac{dy}{y} = 1 = \int\limits_1^{\infty} \int\limits_1^{\infty}\frac{1}{x}\frac{1}{y}\frac{dx}{x}\frac{dy}{y}. \label{eqn:chixy}
\end{align}
We claim $1<\delta< \infty$, since  if $\delta$ is either  $1$ or
$\infty$, then
\eqref{eqn:mu*1byxy} and \eqref{eqn:chixy}   imply that
$\chi(x,y)=\frac 1x$ almost everywhere  which means
$$
(\frac
1y)^{\rho} H\inv(\frac 1x) + \frac{k}{\rho} ((\frac 1y)^{\rho}-1) =
\Hinv(\frac 1x)
$$
which is impossible for all $y\geq 1$ when $\rho>0$.

{}From \eqref{eqn:chixy} we have
$$
 \int\limits_1^{\infty} \int\limits_1^{\infty} \frac{1}{y}
 \Big[\frac{1}{x} -\chi(x,y)\Big] \frac{dx}{x}\frac{dy}{y} = 0 ,
$$
that is,
\begin{equation}\label{eqn:chiH}
 \int\limits_1^{\infty} \int\limits_1^{\delta} \frac{1}{y} \Big[\frac{1}{x} -\chi(x,y)\Big]\frac{dx}{x}\frac{dy}{y} = \int\limits_1^{\infty} \int\limits_{\delta}^{\infty} \frac{1}{y} \Big[\chi(x,y)-\frac{1}{x}\Big]\frac{dx}{x}\frac{dy}{y} = \Delta \qquad \mbox{ (say)},
\end{equation}
where the integrands are non-negative on both sides using \eqref{eqn:mu*1byxy}.
Now $I_{\tilde{\mu^*}}=1$ implies that

\begin{align*}
1 &= \int\limits_1^{\infty} \int\limits_1^{\infty}\tilde{\mu^*}([-\infty,\tHinv(\frac 1x)]\times(y,\infty]) \frac{dx}{x}\frac{dy}{y} \\
  & =\int\limits_1^{\infty} \int\limits_1^{\infty} \tilde{\mu^*}([\Hinv(1-\frac 1x),\infty]\times(y,\infty]) \frac{dx}{x}\frac{dy}{y}\\
  & = \int\limits_1^{\infty} \int\limits_1^{\infty} \frac{1}{y}\Big(1-\mu^*([-\infty,\Hinv(1-\frac 1x)]\times(1,\infty])\Big) \frac{dx}{x}\frac{dy}{y}\\
  & = \int\limits_1^{\infty} \int\limits_1^{\infty} \frac{1}{y}\Big(1-H\Big(\psi_1(1/y)\Hinv(1-\frac 1x)+\psi_2(1/y)\Big)\Big) \frac{dx}{x}\frac{dy}{y}\\
  & = \int\limits_1^{\infty} \int\limits_1^{\infty} \frac{1}{y}\Big[1-\chi(\frac{x}{x-1},y)\Big] \frac{dx}{x}\frac{dy}{y}.
  \intertext{Use the transformation $z=\frac{x}{x-1}$ and the above equation becomes}
  1& = \int\limits_1^{\infty} \int\limits_1^{\infty} \frac{1}{y} \frac{1}{z-1}\Big[1-\chi(z,y)\Big] \frac{dz}{z}\frac{dy}{y} \\
   & = \int\limits_1^{\infty} \int\limits_1^{\infty} \frac{1}{y} \frac{1}{z-1}\Big[\frac{1}{z}-\chi(z,y)\Big] \frac{dz}{z}\frac{dy}{y} + \int\limits_1^{\infty} \int\limits_1^{\infty} \frac{1}{y} \frac{1}{z-1}\Big[1-\frac{1}{z}] \frac{dz}{z}\frac{dy}{y} \\
   & =  \int\limits_1^{\infty} \int\limits_1^{\infty}\frac{1}{z-1} \frac{1}{y} \Big[\frac{1}{z}-\chi(z,y)\Big] \frac{dz}{z}\frac{dy}{y} + 1.
\end{align*}
Therefore we have
\begin{align}
&\int\limits_1^{\infty} \int\limits_1^{\infty} \frac{1}{x-1}\frac{1}{y} \Big[\frac{1}{x}-\chi(x,y)\Big] \frac{dx}{x}\frac{dy}{y} =0. \nonumber
\intertext{Since $\chi(x,y) \le \frac{1}{x}$ if and only if $x \le \delta$ from \eqref{eqn:mu*1byxy}, we have}
&\int\limits_1^{\infty} \int\limits_1^{\delta} \frac{1}{x-1}\frac{1}{y} \Big[\frac{1}{x} -\chi(x,y)\Big]\frac{dx}{x}\frac{dy}{y} = \int\limits_1^{\infty} \int\limits_{\delta}^{\infty} \frac{1}{x-1} \frac{1}{y} \Big[\chi(x,y)-\frac{1}{x}\Big]\frac{dx}{x}\frac{dy}{y} \label{eqn:chiHxminus1}
\intertext{where the integrands on both sides are non-negative. But referring to \eqref{eqn:chiH} we have}
& \int\limits_1^{\infty} \int\limits_1^{\delta} \frac{1}{x-1}\frac{1}{y} \Big[\frac{1}{x} -\chi(x,y)\Big]\frac{dx}{x}\frac{dy}{y} \ge \frac{1}{\delta-1} \int\limits_1^{\infty} \int\limits_1^{\delta} \frac{1}{y} \Big[\frac{1}{x} -\chi(x,y)\Big]\frac{dx}{x}\frac{dy}{y} = \frac{\Delta}{\delta-1} \label{eqn:contra1}
\intertext{with equality holding only if the integrand is $0$ almost everywhere. Similarly we have }
& \int\limits_1^{\infty} \int\limits_{\delta}^{\infty} \frac{1}{x-1}\frac{1}{y} \Big[\chi(x,y)- \frac{1}{x} \Big]\frac{dx}{x}\frac{dy}{y} \le  \frac{1}{\delta-1} \int\limits_1^{\infty} \int\limits_{\delta}^{\infty} \frac{1}{y} \Big[\chi(x,y)-\frac{1}{x}\Big]\frac{dx}{x}\frac{dy}{y} = \frac{\Delta}{\delta-1} \label{eqn:contra2}
\end{align}
with equality holding only if the integrand is $0$ almost everywhere. The integrand cannot be $0$ since it will imply  $\chi(x,y) = \frac{1}{x}$ almost everywhere meaning $\rho=0$. But our assumption is $\rho>0$. Thus with strict inequality holding for both \eqref{eqn:contra1} and \eqref{eqn:contra2} we have a contradiction in equation \eqref{eqn:chiHxminus1}. Thus we cannot have $\rho >0$.\\

The case with $\rho<0$ can be proved similarly.
Hence the result.
\end{enumerate}\end{enumerate}
\end{proof}
This corollary provides a detection technique for the limit measure
being a product measure. Given a sample of size $n$, we plot
$\Hillish$ for values of $k$ and then try to see whether it
stabilizes close to $1$ or not. If the statistic is close to another value, this is evidence that the model is applicable but the limit measure is not product.

 \subsection{The Pickandsish statistic, $\Pick(p)$}
Another way to check the suitability of the CEV assumption and to detect a product measure
in the limit is to use the Pickandsish statistic which is based on ratios of differences of ordered concomitants.  The statistic is patterned on the Pickands estimate
for the parameter of an extreme value distribution
(\citet{pickands:1975},
\citet[page 83]{dehaan:ferreira:2006},
\citet[page 93]{resnickbook:2007}). For a fixed $k<n$, recall that $X_{1:k}^* \le
\ldots \le
 X_{k:k}^*$ are the order statistics in increasing order from
 $X_1^*,X_2^*,\ldots,X_k^*$, the concomitants of $Y_{(1)}\ge \ldots
 \ge Y_{(k)}$, the order statistics in decreasing order from
 $Y_1,Y_2,\ldots,Y_n$. For notational convenience for $s\le t$ write
 $X_{s:t}:=X_{\lceil s \rceil:\lceil t \rceil}$. Now define the
 Pickandsish statistic  for $0<p<1$,
 \begin{align}\label{def:pick}
 \Pick(p) := \frac{X_{pk:k}^*-X_{pk/2:k/2}^*}{X_{pk:k}^*-X_{pk/2:k}^*}.
 \end{align}

 \begin{Proposition} \label{prop:convpick}
 Suppose $(X_1,Y_1),\ldots,(X_n,Y_n)$ follows a CEV model. Let  $0<p<1$,
  Then, as $k,n\to \infty$ with $k/n \to
 0$, we have
 \begin{align}
 \Pick(p) & \cinP \frac{H\inv(p)(1-2^{\rho}) -
 \psi_2(2)}{H\inv(p)-H\inv(p/2)}, \label{eqn:rplim}
 \end{align}
provided $H\inv(p)-H\inv(p/2) \neq 0$. Here $\psi_1$ and $\psi_2$ are  defined in
 \eqref{eqn:condpsi1psi2} and $\rho = \log(\psi_1(c))/\log c$.
  \end{Proposition}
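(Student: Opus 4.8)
The plan is to exploit the location–scale invariance of $\Pick(p)$ together with the empirical measure convergence already established in Proposition \ref{prop:empconv}. First I would observe that $\Pick(p)$ is unchanged when every concomitant $X_i^*$ is replaced by $(X_i^*-\beta(n/k))/\alpha(n/k)$: each order statistic transforms under the same affine map, the common factor $\alpha(n/k)$ cancels between numerator and denominator, and the shift $\beta(n/k)$ cancels inside each difference. Hence it suffices to track the three normalized order statistics
\[
\frac{X_{pk:k}^*-\beta(n/k)}{\alpha(n/k)},\qquad
\frac{X_{pk/2:k}^*-\beta(n/k)}{\alpha(n/k)},\qquad
\frac{X_{pk/2:k/2}^*-\beta(n/k)}{\alpha(n/k)}.
\]

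The two order statistics formed from the top $k$ concomitants are handled directly: equation \eqref{eqn:nonumber} says precisely that the empirical distribution function of $\{(X_i^*-\beta(n/k))/\alpha(n/k):1\le i\le k\}$ converges weakly to the continuous limit $H$ (and, the limit being deterministic, this is convergence in probability at every point). Inverting this convergence with the same device used for \eqref{eqn:Ybnk}, the empirical $p$- and $(p/2)$-quantiles satisfy
\[
\frac{X_{pk:k}^*-\beta(n/k)}{\alpha(n/k)}\cinP \Hinv(p),
\qquad
\frac{X_{pk/2:k}^*-\beta(n/k)}{\alpha(n/k)}\cinP \Hinv(p/2),
\]
provided $\Hinv$ is continuous at $p$ and $p/2$; the hypothesis $\Hinv(p)\neq\Hinv(p/2)$ ensures the denominator of \eqref{eqn:rplim} is nonzero.

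The crux is the third term $X_{pk/2:k/2}^*$, the empirical $p$-quantile of the top $k/2$ concomitants, whose natural normalization is by $\alpha(n/k'),\beta(n/k')$ with $k'=\lceil k/2\rceil$ rather than by $\alpha(n/k),\beta(n/k)$. Applying Proposition \ref{prop:empconv} along the sequence $k'$ (which still obeys $k'\to\infty$, $k'/n\to0$) and inverting as above gives $(X_{pk/2:k/2}^*-\beta(n/k'))/\alpha(n/k')\cinP \Hinv(p)$. Since $\psi_1(c)=c^{\rho}$ makes $\alpha$ regularly varying with index $\rho$, and since $c_n:=(n/k')/(n/k)=k/k'\to 2$, the uniform convergence theorem gives $\alpha(n/k')/\alpha(n/k)\to 2^{\rho}$, while the locally uniform version of the variational property \eqref{eqn:condpsi1psi2} gives $(\beta(n/k')-\beta(n/k))/\alpha(n/k)\to\psi_2(2)$. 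Writing
\[
\frac{X_{pk/2:k/2}^*-\beta(n/k)}{\alpha(n/k)}
=\frac{X_{pk/2:k/2}^*-\beta(n/k')}{\alpha(n/k')}\cdot\frac{\alpha(n/k')}{\alpha(n/k)}
+\frac{\beta(n/k')-\beta(n/k)}{\alpha(n/k)}
\]
and invoking Slutsky then yields $(X_{pk/2:k/2}^*-\beta(n/k))/\alpha(n/k)\cinP 2^{\rho}\Hinv(p)+\psi_2(2)$.

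Finally, because all three limits are constants, the marginal convergences in probability combine automatically into joint convergence, so the numerator of $\Pick(p)$ tends to $\Hinv(p)(1-2^{\rho})-\psi_2(2)$ and the denominator to $\Hinv(p)-\Hinv(p/2)$; a last application of the continuous mapping theorem to the ratio, legitimate since the limiting denominator is nonzero, delivers \eqref{eqn:rplim}. I expect the main obstacle to be exactly the cross-scale bookkeeping in the third term: one must pass from the $k/2$-normalization to the $k$-normalization, and it is this step — governed by \eqref{eqn:condpsi1psi2} together with the uniform convergence of regularly varying functions (needed because $c_n\to2$ rather than equalling $2$ when $k$ is odd) — that manufactures the factor $2^{\rho}$ and the shift $\psi_2(2)$ distinguishing the limit from a trivial value. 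The quantile inversion is routine but still needs care at possible flat stretches of $H$.
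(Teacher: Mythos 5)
Your proposal is correct and follows essentially the same route as the paper's proof: invert the empirical distribution convergence \eqref{eqn:nonumber} to get the three normalized quantile limits \eqref{eqn:pick1}--\eqref{eqn:pick3}, use the variational property \eqref{eqn:condpsi1psi2} to convert the $k/2$-scale normalization into the $k$-scale one (producing the factor $2^{\rho}$ and the shift $\psi_2(2)$), and finish with Slutsky applied to the ratio. Your added care about $k'=\lceil k/2\rceil$ versus $k/2$ and the locally uniform convergence of regularly varying functions addresses a point the paper silently glosses over by writing $\alpha(2n/k)$, $\beta(2n/k)$, but it is a refinement of, not a departure from, the same argument.
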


 \begin{proof}
Since $H_n $ in \eqref{eqn:nonumber} is a probability distribution
converging  to the limit $H$,
we may  invert the convergence and obtain
\citep[Proposition 2.2, page 20]{resnickbook:2007},
$$H_n\inv (z)\cinP H\inv(z)$$
 for $0 < z < 1$ for which $\Hinv$ is continuous. The convergence of
 $H_n^\leftarrow(\cdot)$ translates to
\begin{align}
H_n\inv (z) & = \inf \{ u \in \R: H_n(u)\ge z \}
            = \inf \{u \in \R: \sum\limits_{i=1}^{k} \epsilon_{\big(\frac{X_i^*- \beta
            (n/k)}{\alpha(n/k)}\big)}[-\infty,u] \ge kz \} \nonumber\\
 & = \frac{X^*_{\lceil kz \rceil:k} - \beta
            (n/k)}{\alpha(n/k)}  \Rightarrow H\inv(z)
            \label{eqn:xquan}
\end{align}
where $X^*_{1:k} \le \ldots X^*_{k:k}$ are the increasing
            order statistics of the concomitants $X^*_1,\ldots
            X^*_k$.

 From \eqref{eqn:xquan}, we have, for $0 < p \le 1 $,
 if $k,n\to \infty$ and $k/n\to 0$,
 \begin{align}
 &\frac{X^*_{ pk :k} - \beta
            (n/k)}{\alpha(n/k)}  \cinP H\inv(p), \label{eqn:pick1}\\
             &\frac{X^*_{ (p/2)k :k} - \beta
            (n/k)}{\alpha(n/k)}  \cinP H\inv(p/2), \label{eqn:pick2}\\
&\frac{X^*_{ pk/2 :k/2} - \beta
            (2n/k)}{\alpha(2n/k)}  \cinP H\inv(p).\label{eqn:pick3}
 \end{align}
 Also recall from \eqref{eqn:condpsi1psi2} that
 \begin{align*}
&\lim\limits_{t \to \infty} \frac{\alpha(tc)}{\alpha(t)}  =
\psi_1(c)=c^{\rho}, & &\lim\limits_{t \to \infty}
\frac{\beta(tc)-\beta(t)}{\alpha(t)}  = \psi_2(c) ,
\end{align*}
where $\psi_2$ can be either 0 or $\psi_2(c) = D
\frac{c^{\rho}-1}{\rho}$ for some $D \neq 0$ and $\rho \in \R$. Now
note that using Slutsky's theorem we have
\begin{align*}
\frac{X^*_{ pk :k}-X^*_{ pk/2 :k/2}}{\alpha(n/k)} & = \frac{X^*_{ pk
:k}-\beta(n/k)}{\alpha(n/k)} - \frac{X^*_{ pk/2 :k/2} -
\beta(2n/k)}{\alpha(2n/k)}\times\frac{\alpha(2n/k)}{\alpha(n/k)} -
\frac{\beta(2n/k)-\beta(n/k)}{\alpha(n/k)}\\
& \cinP H\inv(p) - H\inv(p)2^{\rho} - \psi_2(2), \qquad
 \intertext{and also,} \frac{X^*_{ pk :k}-X^*_{
(p/2)k :k}}{\alpha(n/k)} & = \frac{X^*_{ pk
:k}-\beta(n/k)}{\alpha(n/k)} - \frac{X^*_{ (p/2)k :k} -
\beta(n/k)}{\alpha(n/k)} \\
& \cinP H\inv(p) - H\inv(p/2). \intertext{Since $H\inv(p) -
H\inv(p/2) \neq 0$, another use of Slutsky gives us} \Pick(p) & =
\frac{(X^*_{ pk :k}-X^*_{ pk/2 :k/2})\alpha(n/k)}{(X^*_{ pk
:k}-X^*_{ (p/2)k :k})\alpha(n/k)}  \cinP \frac{H\inv(p)(1-2^{\rho})
-
 \psi_2(2)}{H\inv(p)-H\inv(p/2)}.
\end{align*}
 \end{proof}

\begin{Corollary}\label{cor:whenPickImpliesProd}
Suppose there exists $0<p_1 < p_2<1$ such that $H\inv(p_1)< H\inv(p_2)$, and for $i=1,2$,
$H\inv(p_i)-H\inv(p_i/2)\neq 0$. Then under the conditions of
Proposition \ref{prop:convpick}, $\mu^*$ is a product measure if and
only if $$\Pick(p_i) \cinP 0, \quad i=1,2.$$
\end{Corollary}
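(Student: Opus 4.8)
The plan is to read off both directions directly from the limit identity established in Proposition \ref{prop:convpick}, namely
\begin{align*}
\Pick(p) \cinP \frac{\Hinv(p)(1-2^{\rho}) - \psi_2(2)}{\Hinv(p)-\Hinv(p/2)},
\end{align*}
and then to use the two distinct evaluation points $p_1,p_2$ to separate the contributions of $\rho$ and $\psi_2(2)$. I would begin by recalling, from the Remark and from Proposition \ref{prop:hillishprod}, that $\mu^*$ is a product measure precisely when $\psi_1\equiv 1$ (equivalently $\rho=0$) and $\psi_2\equiv 0$; this reduces the corollary to a purely algebraic manipulation of the displayed limit.

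For the forward implication, I would assume $\mu^*$ is a product measure, so $\rho=0$ and $\psi_2\equiv 0$. Then $2^{\rho}=1$ and $\psi_2(2)=0$, so the numerator above vanishes for every $p$, while the denominator is nonzero by the standing hypothesis $\Hinv(p_i)-\Hinv(p_i/2)\neq 0$. Hence $\Pick(p_i)\cinP 0$ for $i=1,2$.

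For the converse, I would assume $\Pick(p_i)\cinP 0$ for $i=1,2$. Since the denominators are nonzero, the limit can vanish only if the numerators do, yielding the two relations
\begin{align*}
\Hinv(p_1)(1-2^{\rho}) = \psi_2(2), \qquad \Hinv(p_2)(1-2^{\rho}) = \psi_2(2).
\end{align*}
Subtracting gives $(\Hinv(p_1)-\Hinv(p_2))(1-2^{\rho})=0$, and since $\Hinv(p_1)<\Hinv(p_2)$ strictly, the first factor is nonzero, forcing $2^{\rho}=1$, i.e. $\rho=0$. Substituting back into either relation gives $\psi_2(2)=0$. When $\rho=0$ the variational function necessarily has the form $\psi_2(c)=k\log c$ (as recorded in the proof of Proposition \ref{prop:hillishprod}), so $\psi_2(2)=k\log 2=0$ forces $k=0$ and hence $\psi_2\equiv 0$. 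Together with $\rho=0$ this is exactly the condition for $\mu^*$ to be a product measure.

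The argument is essentially algebraic once Proposition \ref{prop:convpick} is available; the only point requiring care — and the reason two evaluation points are assumed rather than one — is that a single $p$ cannot disentangle $\rho$ from $\psi_2(2)$, since the numerator depends on both. Choosing $p_1\neq p_2$ with distinct quantiles $\Hinv(p_1)<\Hinv(p_2)$ is precisely what lets the subtraction step isolate the factor $1-2^{\rho}$, and the structural dichotomy for $\psi_2$ then upgrades $\psi_2(2)=0$ to $\psi_2\equiv 0$.
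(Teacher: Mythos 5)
Your proof is correct and follows essentially the same route as the paper: both directions are read off the limit identity of Proposition \ref{prop:convpick} together with the characterization of the product case as $(\psi_1,\psi_2)\equiv(1,0)$, using the two distinct quantiles $\Hinv(p_1)<\Hinv(p_2)$ to pin down $\rho$ and $\psi_2$. The only difference is organizational: where the paper runs a three-case analysis on $(\rho,\psi_2)$ in the converse, you subtract the two vanishing-numerator equations to conclude $1-2^{\rho}=0$ directly and then invoke the $\psi_2(c)=k\log c$ dichotomy to upgrade $\psi_2(2)=0$ to $\psi_2\equiv 0$ --- a slightly more streamlined presentation of the same algebra.
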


 \begin{proof}
{$1^o.$} Assume that $\mu^*$ is a product measure. Then
$(\psi_1,\psi_2)\equiv (1,0)$, i.e., $\rho=0$ and $\psi_2\equiv0$. Hence
$$ H\inv(p)(1-2^{\rho}) - \psi_2(2) = H\inv(p) (1-1) - 0=0.$$
Therefore, provided $0<p<1$ and $H\inv(p)-H\inv(p/2)\neq 0$, Proposition
\ref{prop:convpick} implies $\Pick(p) \cinP 0$.

$2^o.$ Conversely, suppose $p_1 < p_2$ and $\Pick(p_i) \cinP 0$, $i=1,2$. Hence
\begin{align}
 H\inv(p_1)(1-2^{\rho}) - \psi_2(2) & = 0 \label{eqn:pickzero}
\end{align}
\begin{enumerate}
\item Suppose $\rho = 0$ which means $\psi_1\equiv 1$. Then
\eqref{eqn:pickzero} implies $\psi_2(2)=0$ which implies
$\psi_2\equiv 0$. This means $\mu^*$ is a product measure.
\item Suppose $\rho \neq 0$ and $\psi_2 \equiv 0$. Then
\eqref{eqn:pickzero} implies $H\inv(p_i)(1-2^{\rho})=0, i=1,2$. This
implies $H\inv(p_i)=0, i=1,2$, a contradiction to $H\inv(p_1) <
H\inv(p_2)$. So this supposition is not possible.
\item Suppose $\rho \neq 0$ and $\psi_2(c) = D\frac{c^{\rho}-1}{\rho}$
  for $D \neq 0$. Then
\eqref{eqn:pickzero} implies
$(H\inv(p_i)+\frac{D}{\rho})(1-2^{\rho})=0, i=1,2$. This means
$H\inv(p_i)= - \frac{D}{\rho}, i=1,2$, a contradiction to
$H\inv(p_1) < H\inv(p_2)$. So this supposition is not possible.
\end{enumerate}
 Hence we have that $\mu^*$ is a product measure if for $p_1 < p_2$ we have $\Pick (p_i) \cinP 0, i=1,2$.
  \end{proof}

  \subsection{Kendall's Tau  $\rho_{\tau}(k,n)$}
  Classically,  Kendall's tau statistic (\cite{mcneil:frey:embrechts:2005})  is used to measure the strength of association between two rankings. We use  a slightly modified version of the statistic using data pertaining to the $k$ maximum  $Y$-values: $Y_{(1)} \ge \ldots \ge Y_{(k)}$, their concomitants $X_1^*,\ldots,X_k^*$ and the ranks $R_1,\dots,R_k$ of $X_1^*,\dots,X_k^*$. The Kendall's tau statistic is
  \begin{align}\label{def:kendall}
 \rho_{\tau}(k,n) &:=  \frac {4}{k(k-1)} {\sum\limits_{1\le i < j \le k} \bone_{\{R_i <R_j\}} }-1.
 \end{align}
  This statistic can also be used to show the appropriateness
 of the CEV model and to decide if the limit measure is a product. We
 show that under the CEV model $\rho_{\tau}(k,n)$ as defined in
 \eqref{def:kendall} converges in probability to a limiting constant  and when
 the CEV model holds with a product measure, the limit is 0.

   First we prove a lemma on copulas in $[0,1]^2$ which leads to proving convergence for the statistic $\rho_{\tau}(k,n)$. Recall that a two dimensional copula is any distribution function defined on $[0,1]^2$ with uniform marginals (\cite{mcneil:frey:embrechts:2005}).
 \begin{Lemma} \label{lem:copulas}
Suppose $\{C_{\infty}, C_n \;n \ge 1\} $ are copulas on $[0,1]^2$,
$C_{\infty}$ is continuous and $C_n \Rightarrow C_{\infty}$. Then
\begin{align} \label{eqn:copn}
\int\limits_{[0,1]^2} C_n(u-,v-)dC_n(u,v) & \to
\int\limits_{[0,1]^2} C_{\infty}(u,v)dC_{\infty}(u,v),\qquad (n\to\infty).
\end{align}
 \end{Lemma}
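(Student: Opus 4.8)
The plan is to treat the map $C \mapsto \int_{[0,1]^2} C\,dC$ as a functional that is continuous at the continuous copula $C_\infty$, exploiting two structural facts: every copula is a probability measure on the \emph{compact} square $[0,1]^2$, and the limit $C_\infty$ is a bounded continuous function there. First I would upgrade the hypothesis $C_n \Rightarrow C_\infty$ to uniform convergence. Since weak convergence of distribution functions on $[0,1]^2$ means $C_n(u,v)\to C_\infty(u,v)$ at every continuity point of $C_\infty$, and $C_\infty$ is continuous everywhere, P\'olya's theorem (in its two-dimensional form on the compact square) yields $\sup_{(u,v)}|C_n(u,v)-C_\infty(u,v)| \con 0$. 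Because $C_\infty(u-,v-)=C_\infty(u,v)$ by continuity, a short sandwiching argument upgrades this to the left-continuous modifications as well, giving $\sup_{(u,v)}|C_n(u-,v-)-C_\infty(u,v)| \con 0$.

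Next I would split the difference of the two integrals as
\[
\int_{[0,1]^2} C_n(u-,v-)\,dC_n(u,v) - \int_{[0,1]^2} C_\infty(u,v)\,dC_\infty(u,v) = A_n + B_n,
\]
where
\[
A_n = \int_{[0,1]^2}\big[C_n(u-,v-)-C_\infty(u,v)\big]\,dC_n(u,v), \qquad B_n = \int_{[0,1]^2} C_\infty\,dC_n - \int_{[0,1]^2} C_\infty\,dC_\infty.
\]
I would bound $A_n$ by the uniform estimate above times the total mass of $dC_n$, which equals $1$ since $C_n$ is a copula; hence $|A_n| \le \sup_{(u,v)}|C_n(u-,v-)-C_\infty(u,v)| \con 0$. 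For $B_n$, weak convergence of the distribution functions $C_n \Rightarrow C_\infty$ on the compact square is equivalent to weak convergence of the underlying probability measures $dC_n$ to $dC_\infty$; since $C_\infty$ is bounded and continuous, the portmanteau theorem gives $\int C_\infty\,dC_n \to \int C_\infty\,dC_\infty$, so $B_n \con 0$. Combining the two bounds yields \eqref{eqn:copn}.

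The step I expect to be the real obstacle is the treatment of the left-continuous integrand $C_n(u-,v-)$: the copulas $C_n$ arising in the application (empirical copulas built from ranks) are step functions and need not be continuous, so one cannot simply replace $C_n(u-,v-)$ by $C_n(u,v)$ inside the integral. The resolution is precisely the sandwiching argument of the first paragraph, which shows the left-continuous versions still converge uniformly to the continuous limit $C_\infty$; here the continuity of $C_\infty$ is used in an essential way, as it is what forces the left- and right-limits of the limit to coincide. Everything else — the mass-one bound for $A_n$ and the weak-convergence step for $B_n$ — is routine once the uniform convergence is in hand.
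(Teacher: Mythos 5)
Your proof is correct and takes essentially the same route as the paper's: both arguments upgrade $C_n \Rightarrow C_{\infty}$ to uniform convergence via the continuity of $C_{\infty}$ (which also disposes of the left-limit issue, since $C_{\infty}(u-,v-)=C_{\infty}(u,v)$ and left limits preserve the uniform bound), then split the difference into $\int_{[0,1]^2} |C_n(u-,v-)-C_{\infty}(u,v)|\,dC_n(u,v)$, bounded by the uniform distance because $dC_n$ has total mass one, plus $\int_{[0,1]^2} C_{\infty}\,dC_n - \int_{[0,1]^2} C_{\infty}\,dC_{\infty}$, which vanishes by weak convergence since $C_{\infty}$ is bounded and continuous. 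The only difference is presentational: you make the P\'olya step and the left-limit sandwich explicit, which the paper leaves implicit.
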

\begin{proof}
Since $C_n \Rightarrow C_{\infty}$, the convergence is uniform, that
is, we have $$ ||C_n-C_{\infty}||:=\sup\limits_{(u,v)\in [0,1]^2}
|C_n(u,v) - C_{\infty}(u,v)| \to 0.$$ Therefore
\begin{align*}
& \Big| \int\limits_{[0,1]^2} C_n(u-,v-)dC_n(u,v) -
\int\limits_{[0,1]^2} C_{\infty}(u,v)dC_{\infty}(u,v) \Big| \\
\le &   \int\limits_{[0,1]^2} | C_n(u-,v-) - C_{\infty}(u,v)| dC_n(u,v) +
\Big| \int\limits_{[0,1]^2} C_{\infty}(u,v)dC_n(u,v) -
\int\limits_{[0,1]^2} C_{\infty}(u,v)dC_{\infty}(u,v) \Big|\\
\le & ||C_n-C_{\infty}|| + \Big| \int\limits_{[0,1]^2}
C_{\infty}(u,v)dC_n(u,v) -
\int\limits_{[0,1]^2} C_{\infty}(u,v)dC_{\infty}(u,v) \Big|\\
& \to 0.
\end{align*}
\end{proof}

\begin{Remark}\label{rem:randomVersion}
From Lemma \ref{lem:copulas} we get that if
$\{C_n;n\ge1\}$ are random probability measures and $C_\infty$ is continuous, then
\begin{align} \label{eqn:copnrnd}
\int\limits_{[0,1]^2} C_n(u-,v-)dC_n(u,v) & \cinP
\int\limits_{[0,1]^2} C_{\infty}(u,v)dC_{\infty}(u,v)
\end{align}
\end{Remark}

Define the following copulas on $[0,1]^2$:
 \begin{align}
 C_{\mu^*_n}(x,y) &: = \frac 1k \sum\limits_{i=1}^k
 \epsilon_{\frac{R_i^k}{k},\frac{i}{k}}([0,x]\times[0,y]), \qquad (x,y)\in [0,1]^2 \label{def:cmun}\\
 C_{\mu^*}(x,y) &: = \mu^*([\infty,\Hinv(x)] \times[y^{-1},\infty]). \label{def:cmu}
 \end{align}

 \begin{Proposition}\label{prop:kendallconv}
Suppose $(X_1,Y_1),(X_2,Y_2),\ldots,(X_n,Y_n)$ are i.i.d. observations
from a CEV model which follows \eqref{eqn:doa}-\eqref{eqn:Hispdf} and
suppose $H$ defined in \eqref{eqn:Hispdf}
is continuous. Assume that $k=k(n) \to \infty, n
\to \infty$ and $k/n\to 0$. Then
\begin{align} \label{eqn:kendalllim}
 \Kendall  \cinP 4\int\limits_{[0,1]^2} C_{\mu^*}(x,y)dC_{\mu^*}(x,y)-1 = : J_{\mu^*}.
\end{align}
If $\mu^*$ is a product measure, $J_{\mu^*} =0$
\end{Proposition}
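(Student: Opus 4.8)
The plan is to recognize $\Kendall$ as, up to an asymptotically negligible normalization, the copula functional $\int C(u-,v-)\,dC(u,v)$ evaluated at the empirical copula $C_{\mu^*_n}$ of \eqref{def:cmun}, and then to invoke Remark \ref{rem:randomVersion} once we know that $C_{\mu^*_n}\Rightarrow C_{\mu^*}$ with $C_{\mu^*}$ continuous.

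First I would establish the exact combinatorial identity. The measure $dC_{\mu^*_n}$ places mass $1/k$ at each of the $k$ atoms $(R_i/k,\,i/k)$, and evaluating $C_{\mu^*_n}$ at the lower-left corner $(R_j/k-,\,j/k-)$ counts $\tfrac1k\#\{i:R_i<R_j,\ i<j\}$. Summing over the atoms gives
\begin{align*}
\int\limits_{[0,1]^2} C_{\mu^*_n}(u-,v-)\,dC_{\mu^*_n}(u,v)=\frac{1}{k^2}\sum\limits_{1\le i<j\le k}\bone_{\{R_i<R_j\}},
\end{align*}
so that $\Kendall+1=\tfrac{k}{k-1}\cdot 4\int_{[0,1]^2}C_{\mu^*_n}(u-,v-)\,dC_{\mu^*_n}(u,v)$. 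Since this integral is bounded by $\tfrac{1}{k^2}\binom{k}{2}<\tfrac12$, the factor $k/(k-1)\to1$ contributes nothing in the limit, and it suffices to analyze the copula functional.

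Next I would derive $C_{\mu^*_n}\Rightarrow C_{\mu^*}$ from Proposition \ref{prop:empconv}, which gives $\tfrac1k\sum_{i=1}^k\epsilon_{(R_i/k,\,(k+1)/i)}\Rightarrow L$ in $\bM_+([0,1]\times[1,\infty])$; applying the reciprocal map $T_0:(x,y)\mapsto(x,y^{-1})$ turns $L$ into the copula $C_{\mu^*}(x,y)=\mu^*([-\infty,\Hinv(x)]\times[y^{-1},\infty])$ of \eqref{def:cmu}. The only mismatch is that $T_0$ sends the atoms to $(R_i/k,\,i/(k+1))$ whereas $C_{\mu^*_n}$ uses $(R_i/k,\,i/k)$; since $\sup_{1\le i\le k}|i/(k+1)-i/k|\le 1/(k+1)\to0$, a converging-together argument absorbs this discrepancy. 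I would also confirm that $C_{\mu^*}$ is a continuous copula: continuity of $H$ gives the $X$-marginal $C_{\mu^*}(x,1)=H(\Hinv(x))=x$, the standardization of $\mu^*$ gives the $Y$-marginal $C_{\mu^*}(1,y)=\nu_1([y^{-1},\infty])=y$, and joint continuity follows from continuity of $H$ together with the absolutely continuous second coordinate.

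With continuity of $C_{\mu^*}$ in hand, Remark \ref{rem:randomVersion} yields $\int C_{\mu^*_n}(u-,v-)\,dC_{\mu^*_n}(u,v)\cinP\int C_{\mu^*}\,dC_{\mu^*}$, and combining with the identity above gives $\Kendall\cinP 4\int_{[0,1]^2}C_{\mu^*}\,dC_{\mu^*}-1=J_{\mu^*}$, which is \eqref{eqn:kendalllim}. For the final claim, in the product case $\mu^*=H\times\nu_1$ forces $C_{\mu^*}(x,y)=H(\Hinv(x))\cdot\nu_1([y^{-1},\infty])=xy$, the independence copula, so $\int_{[0,1]^2}xy\,dx\,dy=\tfrac14$ and $J_{\mu^*}=4\cdot\tfrac14-1=0$. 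I expect the main obstacle to be the middle step: transferring the vague convergence of Proposition \ref{prop:empconv} into weak convergence of the empirical copulas, handling the $i/k$ versus $i/(k+1)$ shift and verifying that the limit really is a continuous copula, since everything else is either a bookkeeping identity or a one-line integral.
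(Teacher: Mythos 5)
Your proposal is correct and follows essentially the same route as the paper: the same combinatorial identity expressing $\Kendall$ through $\int C_{\mu^*_n}(u-,v-)\,dC_{\mu^*_n}(u,v)$, the same transfer of Proposition \ref{prop:empconv} into weak convergence of the empirical copulas (including the immaterial $k+1$ versus $k$ shift), the same appeal to Remark \ref{rem:randomVersion}, and the same computation $C_{\mu^*}(x,y)=xy$ in the product case. Your explicit verification that $C_{\mu^*}$ is a continuous copula via its uniform marginals is a point the paper leaves implicit, but it is not a different method.
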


\begin{proof}
Proposition \ref{prop:empconv} implies that as $k,n \to \infty$ with
$k/n \to 0$, for $0\le x \le 1$, $z \ge 1$
$$
 \frac 1k \sum\limits_{i=1}^k \epsilon_{\big(\frac{R_i}k , \frac {k+1}i
\big)} ([0,x] \times[z,\infty])   \Rightarrow \mu^*([-\infty,H\inv
(x)] \times(z,\infty]).
$$
Therefore, for $0\le x,y \le 1$,
\begin{align*}
C_{\mu^*_n}(x,y) &:= \frac 1k \sum\limits_{i=1}^k \epsilon_{\big(\frac{R_i}k , \frac
 ik
\big)} ([0,x] \times[0,y])
 = \frac 1k \sum\limits_{i=1}^k \epsilon_{\big(\frac{R_i}k , \frac
 ik
\big)} ([0,x] \times[0,y)) + o_P(1)\\  & \Rightarrow \mu^*([-\infty,H\inv (x)]
\times(y^{-1},\infty]) = C_{\mu^*}(x,y) .
\end{align*}
since H is continuous, and replacing $k+1$ by $k$
does not matter in the limit. This shows that $C_{\mu_n} \Rightarrow
C_{\mu^*}$.  {}From
 Lemma \ref{lem:copulas}  and Remark \ref{rem:randomVersion}
we have
$$
 S_n^*  : = \int\limits_{0}^{1}\int\limits_{0}^{1}
 C_{\mu^*_n}(x-,y-) dC_{\mu^*_n}(x,y)
 \Rightarrow \int\limits_{0}^{1}\int\limits_{0}^{1}
 C_{\mu^*}(x,y) dC_{\mu^*}(x,y).
$$
 Now note that
\begin{align*}
S_n^* & = \int\limits_{0}^{1}\int\limits_{0}^{1}
 C_{\mu^*_n}(x-,y-) dC_{\mu^*_n}(x,y)
 =   \frac 1k \sum\limits_{i=1}^k
  C_{\mu^*_n}(\frac{R_i}{k}-,\frac{i}{k}-) \\
 & = \frac1{k^2}\sum\limits_{i=1}^k \sum\limits_{l=1}^k
 \epsilon_{\{\frac{R_l}{k}, \frac lk\}}([0,\frac{R_i}{k})\times[0,\frac{i}{k}))
 = \frac1{k^2}\sum\limits_{1 \le l < i \le k} \bone_{\{R_l <
 R_i\}} \\
 &= \frac {k(k-1)}{4k^2} \rho_{\tau}(k,n) - \frac{1}{k}.
\end{align*}
 Hence we have as $k,n \to \infty$ with $k/n \to 0$,
$$
 \rho_{\tau}(k,n)= \frac{k}{k-1}(4S_n^*-1)+ \frac{1}{k-1}
 \Rightarrow 4 \int\limits_{0}^{1}\int\limits_{0}^{1}
 C_{\mu^*}(x,y) dC_{\mu^*}(x,y)-1 =: J_{\mu^*}.
$$

If $\mu^*$ is a product, for $0\le x,y \le 1$,
$$
 C_{\mu^*}(x,y) : = \mu^*([-\infty,\Hinv(x)] \times[y^{-1},\infty])
                 = y \times H(\Hinv(x)) = xy.
$$
 Hence
$$
\int\limits_{0}^{1}\int\limits_{0}^{1}
 C_{\mu^*}(x,y) dC_{\mu^*}(x,y)  =
 \int\limits_{0}^{1}\int\limits_{0}^{1} xy dx dy = \frac 14
$$
and the result follows.
\end{proof}

Proposition \ref{prop:kendallconv} would detect that a limit is not a
product if the statistics stabilizes at a non-zero value.
We have not been able to prove a limit of 0 implies a product measure
and doubt the truth of this statement.

\begin{Remark}
 The three statistics provided above each have their own advantages and disadvantages.
 \begin{itemize}
 \item They are not hard to calculate.
 \item For the CEV model we have shown that all these statistics stabilize as $k,n \to \infty$ with $k/n \to 0$.
\item The rank-based statistics Hillish and Kendall's tau are smooth in nature as the rank transform removes the extremely high or low values.
\item The disadvantage of the Pickandsish statistic  is that its plot lacks smoothness  and exhibits erratic behavior for small data sets.
\item Obtaining distributional properties for these statistics would require further limit conditions on the variables, presumably some form of second order behavior.
\end{itemize}
\end{Remark}

\section{Examples and applications}\label{sec:data}
In this section we apply the three estimators proposed in Section
\ref{sec:product} to  data sets and judge their performances in the
various cases. First we deal with simulated data from
specific models discussed in \cite{das:resnick:2008a}. Then we apply
our techniques to Internet traffic data.

\subsection{Simulation from known CEV limit models}\
\begin{Example} \label{ex:ex1}
Let $X$ and $Y$ be independent random variables with $X \sim N(0,1)$
and $Y \sim Pareto(1)$. Then the following convergence holds in
$\mathbb{M}_{+}([-\infty,\infty]\times(0,\infty])$ (actually it is an equality)
\begin{align*}
  & t\P \Bigl[\Bigl( X,\frac{Y}{t}\Bigr) \in
 [-\infty,x]\times(y,\infty]\Bigr] = \Phi(x) y^{-1}, \quad -\infty < x < \infty , \,y
 \ge 1
\end{align*}
where $\Phi$ denotes the standard normal distribution function. We
have a CEV model here with $\alpha \equiv 1,\, \beta \equiv 0, \,
a(t)=t, \, b \equiv 0$. The limit measure is a product. Hence,
theoretically
$$
\Hillish \cinP
1,\qquad  \rho_{\tau}(k,n) \cinP 0, \qquad \Pick(p) \cinP 0, \qquad
0<p<1.
$$
We simulate
a sample of size $n=1000$ and plot the above estimators for $1\le k
\le n$. For the Pickandsish statistic we have chosen $p=0.5$.
\begin{figure}[h]
\begin{centering}
\includegraphics[width=150mm]{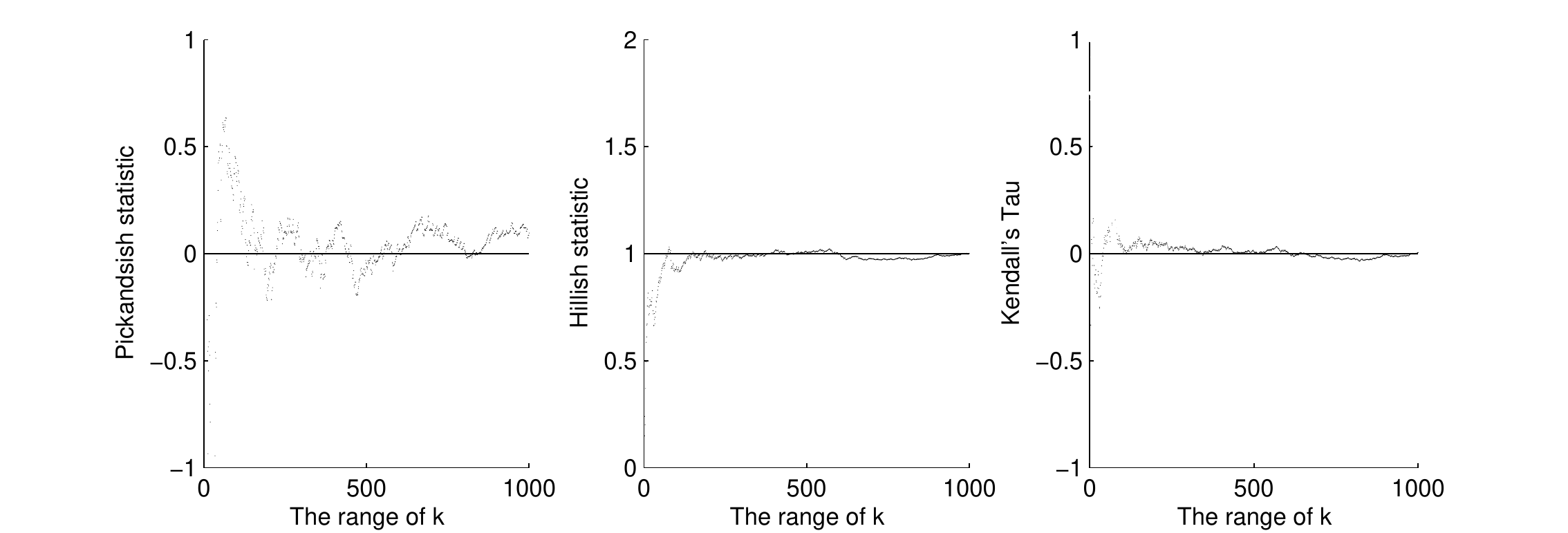}
\end{centering}
\caption{  $\Pick(0.5)$, $\Hillish$ and $\rho_{\tau}(k,n)$ plotted for Example  \ref{ex:ex1} }
\end{figure}
The simulated data supports the theoretical results stated.
\end{Example}

\begin{Example} \label{ex:ex2}
Let $X$ and $Z$ be independent Pareto random variables where $X \sim
Pareto(\rho)$ and $Z \sim Pareto(1-\rho)$ with $0 < \rho <1$. Define $
Y = X \wedge Z. $ Then we can check that the following holds in
$\mathbb{M}_{+}([0,\infty]\times(0,\infty])$: For $x \ge y >0$  and
  $t$ large,
$$
 t\P \Bigl[\Bigl( \frac{X}{t},\frac{Y}{t}\Bigr) \in
  [0,x]\times(y,\infty]\Bigr]  =
  \frac{1}{y^{1-\rho}}\Bigl(\frac{1}{y^{\rho}} -
  \frac{1}{x^{\rho}}\Bigr) = \frac{1}{y}\Bigl(1 -
  \frac{y^{\rho}}{x^{\rho}}\Bigr) =:
  \mu^{*}([0,x]\times(y,\infty]).
$$
Theoretically the values of the limits of $\Hillish$ and $\Pick(p)$ are as
follows.
\begin{align*}
\Hillish & \cinP \int\limits_1^{\infty}\int\limits_1^{\infty}
\mu^*([0,\Hinv(\frac{1}{x})]\times(y,\infty]) \frac{dx}{x} \frac{dy}{y}
 = \int\limits_{x=1}^{\infty}\int\limits_{y=1}^{(\frac{x}{x-1})^{1/\rho}}
\frac 1y \Big( 1 -
\frac {y^{\rho}} {\frac{x}{x-1}}\Big) \frac{dx}{x}  \frac{dy}{y}\\
& = \int\limits_{x=1}^{\infty} \Big[1 - \frac{1}{1-\rho}\frac{x-1}{x} + \frac{\rho}{1-\rho} \big(\frac{x-1}{x}\big)^{1/\rho}\Big] \frac{dx}{x}\\
& = \frac{\rho}{1-\rho} \int\limits_{x=1}^{\infty} \sum\limits_{k=2}^{\infty} \binom{1/\rho}{k} \Big(\frac{1}{x}\Big)^{k+1} dx
 = \frac{\rho}{1-\rho}\sum\limits_{k=2}^{\infty} \binom{1/\rho}{k}
 \frac{1}{k}.
\end{align*}
Now for $0<p<1$ we have,
\begin{align*} \Pick(p)
& \cinP \frac{H\inv(p)(1-2^{\rho}) -
 \psi_2(2)}{H\inv(p)-H\inv(p/2)}
 = \frac{1-2^{\rho}}{1- \big(\frac{1-p}{1-p/2}\big)^{1/\rho}}.
\end{align*}
For calculating the Kendall's tau statistics observe that from definition we have:
 \begin{align*}
  C_{\mu^*}(x,y)  & = y \Big(1- \frac{1}{y^\rho}(1-x)\Big) = y - y^{1-\rho}(1-x),\\
  dC_{\mu^*}(x,y) & = (1-\rho)y^{-\rho} dx dy.
 \intertext{Hence we have}
 \int\limits_{0}^{1}\int\limits_{0}^{1}
 C_{\mu^*}(x,y) dC_{\mu^*}(x,y) & = (1-\rho)\int\limits_{0}^{1}\int\limits_{0}^{1} ( y - y^{1-\rho}(1-x)) y^{-\rho} dx dy\\
  & = \frac{1-\rho}{2-\rho} - \frac14.
  \intertext{Therefore}
  \rho_{\tau}(k,n) & \cinP 4\int\limits_{0}^{1}\int\limits_{0}^{1}
 C_{\mu^*}(x,y) dC_{\mu^*}(x,y) -1 = \frac{4(1-\rho)}{2-\rho} -2 = -\frac{2\rho}{2-\rho}.
 \end{align*}
For $\rho=0.5$ and $p=0.5$, theoretically we have
$$
\Hillish \cinP
0.5,\qquad  \rho_{\tau}(k,n) \cinP -0.67, \qquad \Pick(p) \cinP -0.75.
$$
 We simulate a sample of size $n=1000$ with $\rho=0.5$ and plot the three statistics
for $1\le k \le n$. For the Pickandsish statistic we have chosen $p=0.5$.
\begin{figure}[h]
\begin{centering}
\includegraphics[width=150mm]{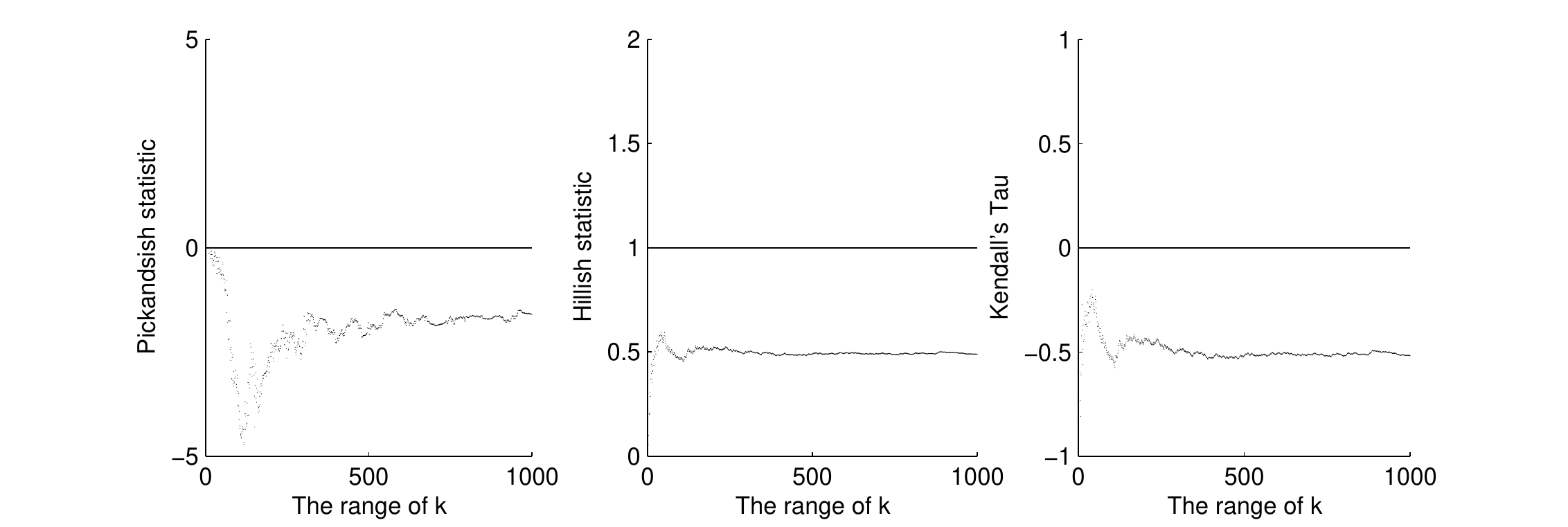}
\end{centering}
\caption{  $\Pick(0.5)$, $\Hillish$ and $\rho_{\tau}(k,n)$ plotted for Example  \ref{ex:ex2}}
\end{figure}
The graphs are consistent with the obtained theoretical limits.
\end{Example}

\subsection{Internet traffic data}\label{subsec:Internet}
Internet traffic data has often provided scope for heavy-tail
modeling. Variables such as file size, transmission duration and
session length have been observed to be heavy-tailed
\citep{maulik:resnick:rootzen:2002,resnick:2003,sarvotham:riedi:baranuik:2005}.

   We study a particular data set of GPS-synchronized traces  that
   were recorded at the University of Auckland
   \emph{http://wand.cs.waikato.ac.nz/wits}. The raw data contains
   measurements on packet size, arrival time, source and destination
   IP, port number, Internet protocol, etc. We consider traces
   corresponding  exclusively to incoming TCP traffic sent on December
   8, 1999, between 3  and 4 p.m.  The packets were clustered into
   end-to-end (e2e)
   sessions  which are  clusters of packets with the same
   source and destination IP address such that the delay between
   arrival of two successive packets in a session is at most two
   seconds. We observe three variables $ \{(S_i,L_i,R_i): 1 \le i \le
   54353\}$:

 \begin{align*}
 S_i & = \text{size or number of bytes transmitted in a session} ,\\
 L_i & = \text{duration or length of the session},\\
 R_i & = \frac{S_i}{L_i} ~~\text{ or average transfer rate associated with a session.}
 \end{align*}
The data have been downloaded and processed into sessions by Luis
Lopez Oliveros, Cornell University.
\begin{figure}[h]
\begin{centering}
\includegraphics[width=170mm,height =60mm]{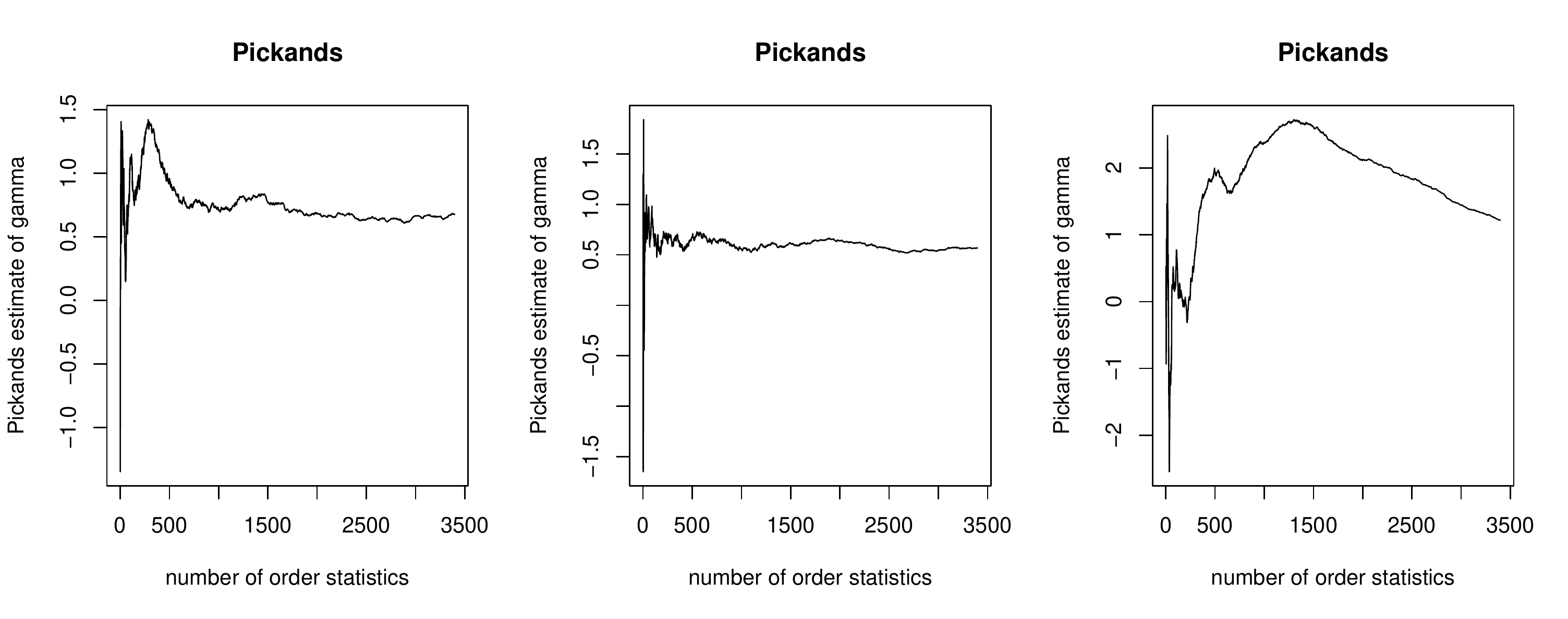}
\hfill
\includegraphics[width=170mm,height =60mm]{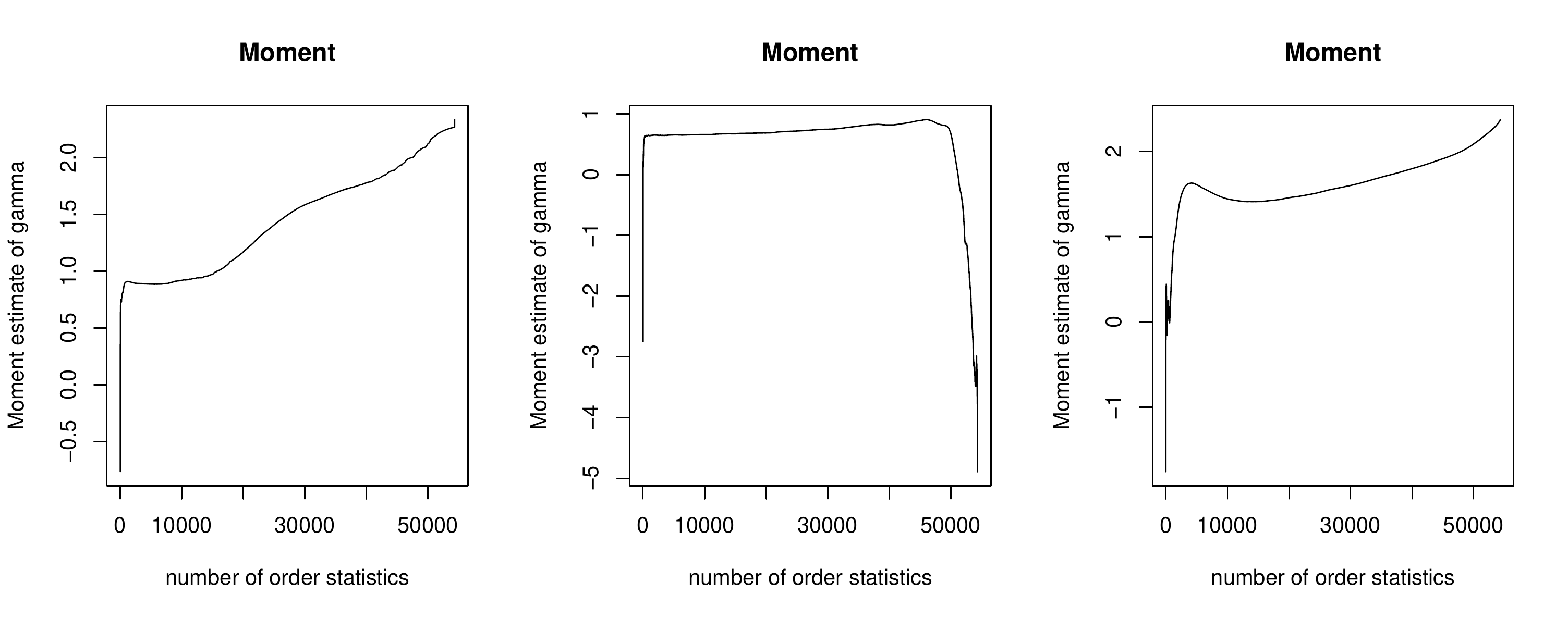}
\end{centering}
\caption{Pickands plot and moment estimate plot of the EV parameter for  size, duration
  and transfer rate.} \label{fig:momall}
\end{figure}

First we check whether the individual variables are heavy-tailed or
not. The Pickands estimator and moment estimators are weakly
consistent for the extreme value parameter $\gamma$
(\cite{dehaan:ferreira:2006}) when the distribution of the variable
under consideration is in $D(G_{\gamma})$
as in \eqref{eqn:Ggamma}. We plot these estimators over $1\le k\le n$ and
observe whether they stabilize over an interval. The Pickands plot
indicates that the Pickands estimates of the extreme value parameter
are stable for  size and duration but not  for the transfer
rate. The moment plot on the other hand shows that the moment estimate
of the extreme-value parameter stabilizes for duration but does not do
that clearly for either  size or transfer rate. Recall that the
CEV model is applicable if either of the variables is in the domain of
attraction of an extreme-value distribution. Clearly there is an
indication that transfer rate might not be in an extreme value
domain.

Now we turn to the three statistics we have devised in this
paper first to detect whether we have a CEV model and  then to check
whether the limit measure is a product. First we consider the pair
$(R,S)$ assuming the distribution of $S$ is in
$D(G_{\gamma})$ for some $\gamma \in \R$. Then
we consider the pair $(R,L)$ assuming  the distribution of $L$ is in $
D(G_{\lambda})$ for some $\lambda \in \R$.

\begin{figure}[h]
\begin{centering}
\includegraphics[width=180mm]{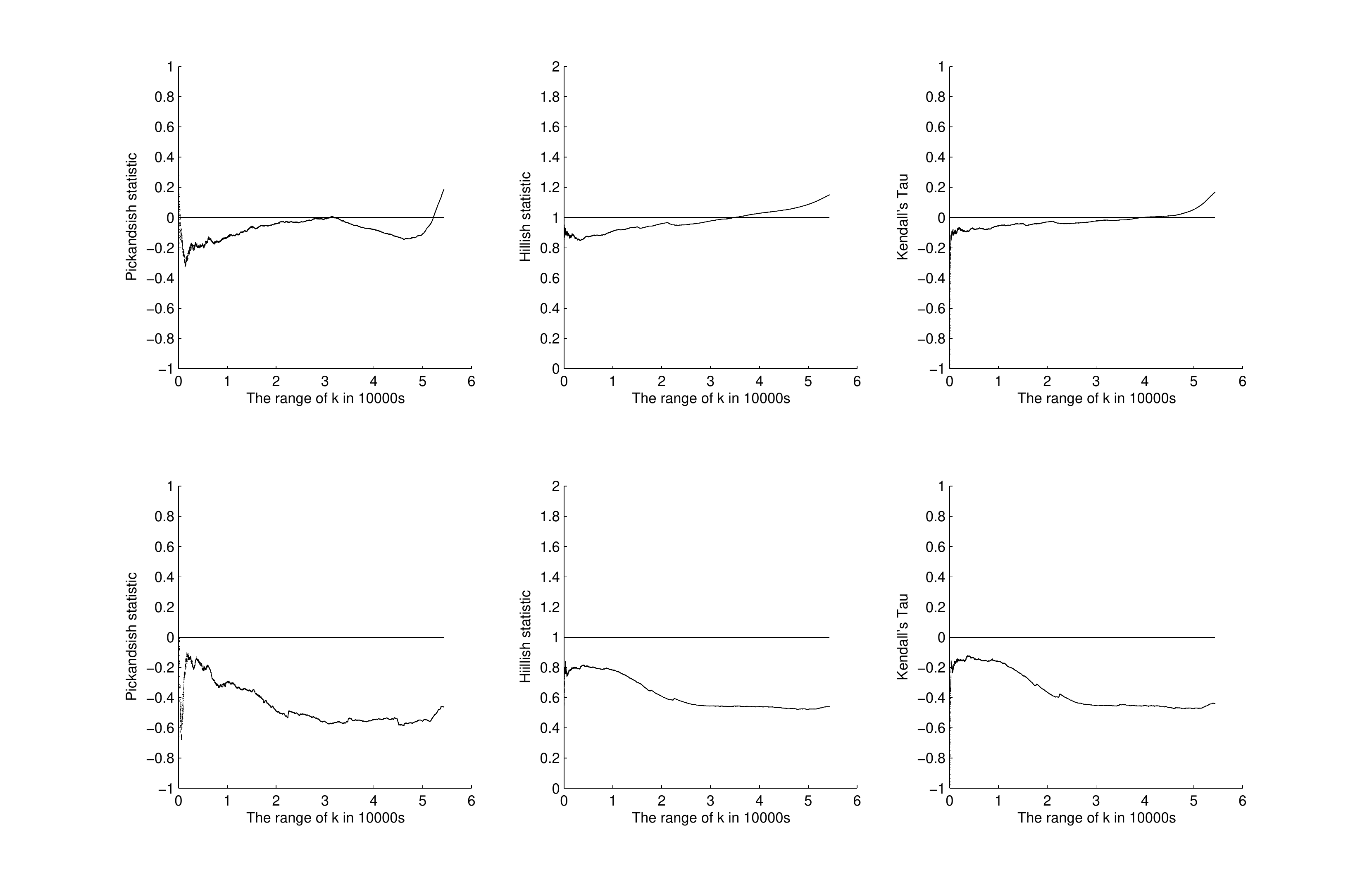}
\end{centering}
\caption{First row: Three statistics for $R_i$ vs. $S_i$;
Second  row: Three statistics for $R_i$ vs. $L_i$.}\label{fig:e2esesalln}
\end{figure}

Observe from Figure \ref{fig:e2esesalln} that neither of the three
statistics stabilizes for the observations  $ (R,S)$. Hence a CEV
model might not be the right model to apply. On the other hand for
$(R,L)$, all the statistics stabilize at some point. But it is clear
they are not stabilizing at a point to indicate product measure. Hence
we have evidence to model (\emph{transfer rate, duration}) as a CEV
model with a non-product limit. Note that this also indicates that we
should be able to standardize to regular variation on
$[0,\infty]\times(0,\infty]$.

\section{Conclusion}
The CEV model is intended to provide us with a deeper understanding of
multivariate distributions which have some components in an
extreme-value domain.  In our discussion,  we have provided
statistics to detect the CEV model in a bivariate set up. These three
statistics perform differently for different data sets as we have
noted in our examples. A further step would be to find asymptotic
distributions for these statistics.
On another direction, it would be nice to obtain statistics for detection of conditional models in a multivariate set up of dimension more than two.
\bibliography{c:/bikram/Bibfiles/bibfile}

\end{document}